\documentclass[11pt]{article}
\pdfoutput=1
\pdfoutput=1
\usepackage{fullpage}
\usepackage{fancyhdr}
\usepackage{epsfig}
\usepackage{amsmath,amssymb,amsthm}
\usepackage{dsfont}
\usepackage{mathtools}
\usepackage{tikz}
\usepackage{wrapfig}
\usepackage{graphicx}
\usepackage{tikz-cd}
\usepackage{bbm}
\setcounter{secnumdepth}{4}
\usepackage[export]{adjustbox}
\usepackage[cmtip,all]{xy}
\usepackage{caption}

\makeatletter
\DeclareRobustCommand{\cev}[1]{%
  \mathpalette\do@cev{#1}%
}
\newcommand{\do@cev}[2]{%
  \fix@cev{#1}{+}%
  \reflectbox{$\m@th#1\vec{\reflectbox{$\fix@cev{#1}{-}\m@th#1#2\fix@cev{#1}{+}$}}$}%
  \fix@cev{#1}{-}%
}
\newcommand{\fix@cev}[2]{%
  \ifx#1\displaystyle
    \mkern#23mu
  \else
    \ifx#1\textstyle
      \mkern#23mu
    \else
      \ifx#1\scriptstyle
        \mkern#22mu
      \else
        \mkern#22mu
      \fi
    \fi
  \fi
}

\makeatother

\makeatletter
\def\slashedarrowfill@#1#2#3#4#5{%
  $\m@th\thickmuskip0mu\medmuskip\thickmuskip\thinmuskip\thickmuskip
\relax#5#1\mkern-7mu%
\cleaders\hbox{$#5\mkern-2mu#2\mkern-2mu$}\hfill
\mathclap{#3}\mathclap{#2}%
\cleaders\hbox{$#5\mkern-2mu#2\mkern-2mu$}\hfill
\mkern-7mu#4$%
}
\def\rightslashedarrowfill@{%
  \slashedarrowfill@\relbar\relbar\mapstochar\rightarrow}
\newcommand\xslashedrightarrow[2][]{%
  \ext@arrow 0055{\rightslashedarrowfill@}{#1}{#2}}
\makeatother

\usepackage[numbers]{natbib}
\bibliographystyle{plainnat}

\theoremstyle{definition}
\newtheorem{definition}{Definition}
\numberwithin{definition}{subsection}

\newtheorem{lemma}{Lemma}
\numberwithin{lemma}{subsection}
\newtheorem{construction}{Construction}
\numberwithin{construction}{subsection}

\numberwithin{notation}{subsection}

\numberwithin{observation}{subsection}

\numberwithin{conjecture}{subsection}
\newtheorem{theorem}{Theorem}
\numberwithin{theorem}{subsection}

\numberwithin{corollary}{subsection}
\newtheorem{proposition}{Proposition}
\numberwithin{proposition}{subsection}
\newtheorem*{acknowledgements}{Acknowledgements}

\DeclareMathOperator{\Da}{\mathcal{D}}
\DeclareMathOperator{\Ea}{\mathcal{E}}

\DeclareMathOperator{\Ra}{\mathcal{R}}

\DeclareMathOperator{\Sa}{\mathcal{S}}
\DeclareMathOperator{\Ta}{\mathcal{T}}
\DeclareMathOperator{\Ka}{\mathcal{K}}

\DeclareMathOperator{\R}{\mathbb{R}}

\DeclareMathOperator{\id}{\textbf{id}}
\DeclareMathOperator{\inv}{^{-1}}

\DeclareMathOperator{\im}{\text{im}}

\DeclareMathOperator{\interior}{\text{int}}
\DeclareMathOperator{\inV}{\text{in}_v}

\DeclareMathOperator{\outV}{\text{out}_v}

\DeclareMathOperator{\inE}{\text{in}}
\DeclareMathOperator{\outE}{\text{out}}
\DeclareMathOperator{\topro}{\xslashedrightarrow{}} 

\newcommand{\nanograph}[1]{\begin{tabular}{@{}c@{}}\includegraphics[scale=.125]{#1}\end{tabular}}
\newcommand{\tinygraph}[1]{\begin{tabular}{@{}c@{}}\includegraphics[scale=.25]{#1}\end{tabular}}
\newcommand{\midgraph}[1]{\begin{tabular}{@{}c@{}}\includegraphics[scale=.40]{#1}\end{tabular}}
\newcommand{\ingraph}[1]{\begin{tabular}{@{}c@{}}\includegraphics[scale=.5]{#1}\end{tabular}}
\newcommand{\fullgraph}[1]{\begin{tabular}{@{}c@{}}\includegraphics{#1}\end{tabular}}

\newcommand{\longsquiggly}{\xymatrix{{}\ar@{~>}[r]&{}}}
\newcommand{\xto}[1]{\xrightarrow{#1}}

\newcommand{\comment}[1]{}
\newcommand{\op}{^\text{op}}

\newbox\gnBoxA
\newdimen\gnCornerHgt
\setbox\gnBoxA=\hbox{$\ulcorner$}
\global\gnCornerHgt=\ht\gnBoxA
\newdimen\gnArgHgt
\def\godelnum #1{%
\setbox\gnBoxA=\hbox{$#1$}%
\gnArgHgt=\ht\gnBoxA%
\ifnum     \gnArgHgt<\gnCornerHgt \gnArgHgt=0pt%
\else \advance \gnArgHgt by -\gnCornerHgt%
\fi \raise\gnArgHgt\hbox{$\ulcorner$} \box\gnBoxA %
\raise\gnArgHgt\hbox{$\urcorner$}}

\title{String Diagrams for Double Categories and Equipments}
\date{}
\author{David Jaz Myers}

\begin{document}

\graphicspath{ {graphics/} }


\maketitle

\begin{abstract}
    A popular graphical calculus for monoidal categories makes computations tactile and intuitive. Complicated diagram chases can be expressed in a few pictures and discovered by playing with a shoelace. Joyal and Street's proof of the soundness of this calculus says that any deformation of a diagram, any bending of the strings, describes the same morphism. In this paper, we extend the graphical calculus to double categories and proarrow equipments in order to bring the string diagrammatic method to formal category theory. Our main theorem proves this calculus sound with the help of Dawson and Par{\'e}'s results on composition in double categories.
\end{abstract}

\section*{Introduction}

String diagrams provide a graphical calculus used most often to describe monoidal (or tensor) categories and their variants. Examples include flow charts, Feynman diagrams, circuit diagrams, Petri nets, Markov processes, and knot diagrams. Because string diagrams are not written sequentially like traditional notation, they are good for describing parallel processes as they arise in a variety of situations. Even better, any suitable deformation of a string diagram describes the same morphism in its respective monoidal category. This lets us compute a complicated composite with simple manipulations of string. In this paper, we extend string diagram notation to double categories and proarrow equipments. Equipments were first introduced by Wood in \cite{Wood}, but are considered here in the slightly more general and standard form known also as a \emph{framed bicategory}, for example in \cite{Shulman2007}.

If categories are the abstract algebras of functions, then \textit{equipments} are the abstract algebras of functions and relations. Where categories consist of objects and arrows between them, equipments consist of four kinds of things: objects; vertical arrows, which are meant to behave like functions; horizontal arrows, which are meant to behave like relations; and 2-cells, which act like implications between relations. The following are a few common examples of equipments:
\begin{itemize}
    \item The equipment of sets, functions, and relations, whose objects are sets, vertical arrows are functions, horizontal arrows are relations, and 2-cells are implications.
    \item The equipment of rings, homomorphisms, and bimodules, whose 2-cells are bimodule morphisms.
    \item The equipment of categories, functors, and profunctors, whose 2-cells are profunctor morphisms.
    \item Generalizing all the above, the equipment of categories enriched in $\mathcal{V}$, $\mathcal{V}$-functors, and $\mathcal{V}$-profunctors, whose 2-cells are morphisms of $\mathcal{V}$-profunctors.
\end{itemize}

A good amount of formal category theory can be carried out in an equipment, specializing to the expected concepts in the equipments of enriched categories. Wood defined equipments in \cite{Wood} for this purpose, and they continue to be used to study enrichment and its generalizations, e.g.~ by Shulman \cite{ShulmanEIC}. 

Equipments are, in particular, a kind of double category. A double category has objects, vertical arrows, horizontal arrows, and 2-cells supporting two sorts of composition: horizontal and vertical. 
\setlength{\intextsep}{2pt}%
\begin{wrapfigure}{r}{0.25\textwidth} 
    \centering
    \includegraphics[width=0.25\textwidth]{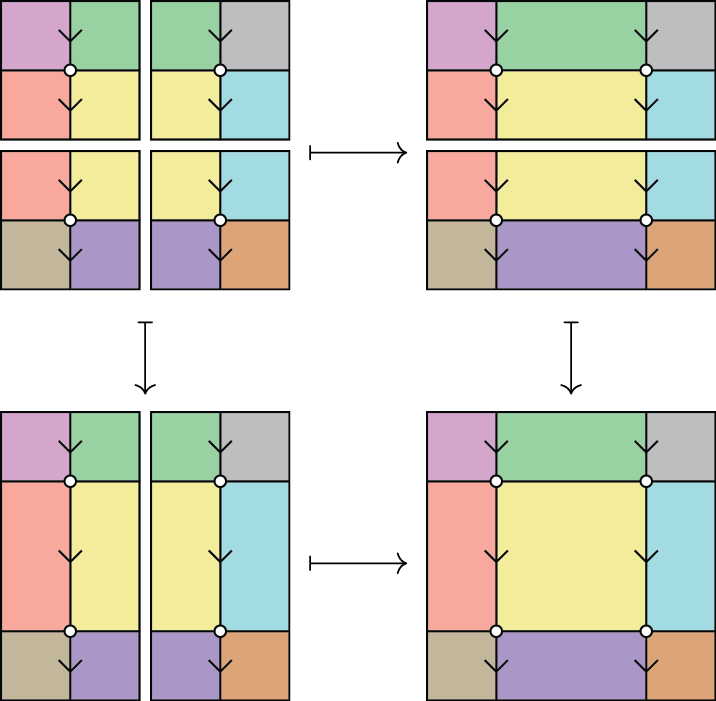}
    \captionsetup{labelformat=empty}
    \caption{\footnotesize Horizontal and vertical composition in the graphical calculus.}
\end{wrapfigure} 
Double categories are used in the study of universal 2-algebra (see e.g. \cite{Kelly2} and \cite{Fiore}), where the vertical and horizontal arrows represent lax and colax morphisms. Double categories have also been used by \cite{Bruni} in the study of rewriting, where the extra direction of arrows takes into account effects and synchronization of rewrites. The mate calculus in double categories was used, for example, by Shulman to study composites of left and right derived functors in \cite{Shulman2011}.

The string diagram graphical calculus for double categories and equipments presented here is the Poincar{\'e} dual of the usual square notation: objects are regions, vertical morphisms are vertical strings, horizontal morphisms are horizontal strings, and squares become beads on these strings. In equipments, vertical strings may be bent horizontally to express the algebra of \textit{companions} and \textit{conjoints}.
\setlength{\intextsep}{2pt}%
\begin{wrapfigure}{l}{0.25\textwidth} 
    \centering
    $\ingraph{ZigZag2Cell_Funct2} = \ingraph{ZigZag2Cell_Funct1}$
\end{wrapfigure} 
In double categories of 2-algebras, these same bends represent pseudo-morphisms (which are both lax and colax) and doctrinal adjunctions respectively. The graphical calculus presented here is also applicable to any vertical arrow in a double category which happens to have a companion or a conjoint, and is therefore applicable also to double categories used in 2-algebra. It is the author's hope that this notation will make working with double categories and equipments easier and more intuitive.

This paper is made up of three parts. In Part 1, we introduce the graphical calculus, and use it to prove a few basic lemmas about equipments. In particular, we discuss the graphical interpretation of the mate calculus in an equipment, and prove the equivalence of the natural transformation and hom-set definitions of an adjunction with the string diagrams. These elementary examples show how the calculus can be used in practice.

In Parts 2 and 3, we prove that the interpretation of a string diagram is invariant under deformation. Part 2 concerns diagrams for double categories. In \cite{Joyal} and \cite{Joyal2}, Joyal and Street define embedded graphs with boundary meant to represent morphisms in various flavors of monoidal category, and prove that the interpretation of these diagrams is invariant under deformation. We will follow their general program and define double and equipment diagrams in a similar manner.

Not all arrangements of 2-cells expressible in the language of double categories admit a composite under the two compositions. Dawson and Par{\'e} undertook a general study of composition in double categories in \cite{Dawson1993} and \cite{Pare} using the notions of rectangle tilings and tile orders. 
\setlength{\intextsep}{2pt}%
\begin{wrapfigure}{l}{0.25\textwidth} 
    \centering
    \includegraphics[width=0.25\textwidth]{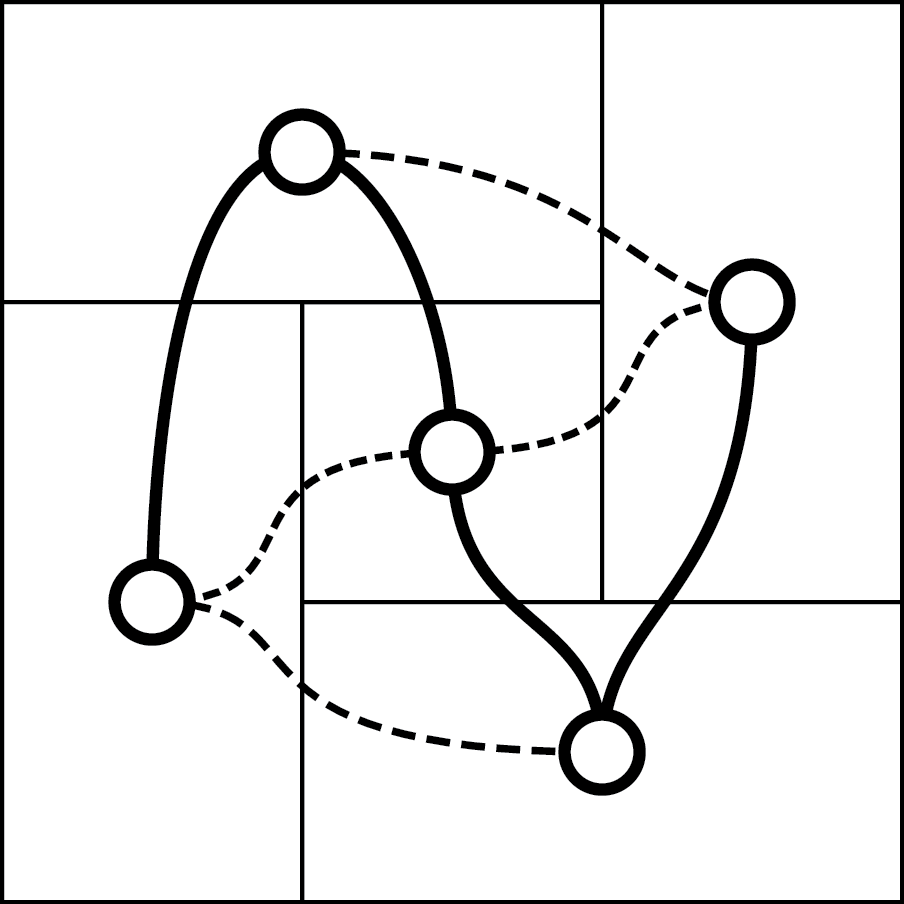}
    \captionsetup{labelformat=empty}
    \caption{\footnotesize The pinwheel (in this and reverse orientation) is the only obstruction to composition in a double category.}
\end{wrapfigure}
We use their tiling results to give conditions under which double diagrams admit composites, and to show that double and equipment diagrams admit unique composites. In \cite{Dawson}, Dawson characterizes the possible obstructions to composition; we use this result to show that double diagrams satisfying a natural condition called \emph{neatness} (after Dawson's similar notion for tile orders) admit composites by avoiding these obstructions. 

Part 3 concerns diagrams for equipments. Since a deformation of equipment diagrams may change the signature --- the boundary data --- of 2-cells, we briefly study what sort of changes may occur and the kinds of effects on the value of a diagram which can result from such a change. We then show that the value of an equipment diagram is invariant under deformation so long as its interpretation changes according to the changes in the signatures of the 2-cells inside it. This establishes the soundness of the graphical mate calculus for double categories and equipments. 

\begin{acknowledgements}
I would like to thank John Baez for suggesting that I take my notes on these diagrams and prove that they are invariant under deformation, and Jack Calcut for discussing the topological aspects of double diagrams. I would like to thank Emily Riehl and Mike Shulman for reading through the drafts of this paper; their thorough and insightful comments helped me a great deal.
\end{acknowledgements}

\tableofcontents

\section{String Diagrams}

\subsection{Definitions}
Let's recall the definition of a double category and introduce the new notation. A double category is a category internal to the category of categories, and so consists of the following data:
\begin{definition}
A \emph{double category} $\Ea$ has
\begin{enumerate}
\item Objects $A$, $B$, $C,$ $\ldots$, which will be written as bounded plane regions of different colors $\ingraph{oY}$, $\ingraph{oR}$, $\ingraph{oB}$, $\ldots$. 
\item Vertical arrows $f$, $g$, $h : \tinygraph{oR} \rightarrow \tinygraph{oY}$, $\ldots$, which we will just call \emph{arrows} and write as vertical lines $\ingraph{vRY}$, directed downwards, dividing the plane region $\ingraph{oR}$ from $\ingraph{oY}$.
\item Horizontal arrows $J$, $K$, $H : \tinygraph{oR} \topro \tinygraph{oY}$, $\ldots$, which we will just call \emph{proarrows} and write as horizontal lines $\ingraph{hRY}$ dividing the plane region $\ingraph{oR}$ from $\ingraph{oY}$.
\item 2-cells $\alpha$, $\beta$, $\ldots$, which we write as beads at the intersection of vertical and horizontal lines.
\[\fullgraph{2RYBG} \quad\quad\quad\quad \begin{tikzcd}
A \arrow{rr}{J} \arrow{dd}[swap]{f} & & B \arrow{dd}{g}\\
& \alpha & \\
C \arrow{rr}[swap]{K}  & & D
\end{tikzcd}\]

\end{enumerate}

Note that the string notation is the Poincar\'{e} dual of the transpose of the usual arrow notation (on the right above). We transpose so vertical arrows will be drawn with vertical lines, and horizontal arrows with horizontal lines, and so that we can easily remember facts about companions and conjoints (which are introduced below) while reading 2-cells top to bottom, left to right in the case of enriched categories. From now on we will also transpose the usual double category notation, so that the two are simply dual.

\end{definition}

We write composition of arrows and proarrows by juxtaposition, and composition of 2-cells by joining matching lines:
$$\ingraph{hComposition1} \mapsto \ingraph{hComposition2}.$$
This is horizontal composition, and in more traditional notation would be written $\alpha,\, \beta \mapsto \alpha \mid \beta$. Vertical composition, where the vertical arrows are joined and the horizontal arrows composed, would be written $\alpha,\,\beta \mapsto \frac{\alpha}{\beta}$.

We read composition of arrows left to right, composition of proarrows top to bottom, and 2-cells top-bottom and left-right. We represent the identity 2-cell on an object $\tinygraph{oR}$ by the same same region of colors; similarly, the identity 2-cells on arrows $\tinygraph{vRY}$ and proarrows $\tinygraph{hRY}$ are represented by just these diagrams. In this way, the identity laws become graphically obvious. 

Composition satisfies the interchange law, which says that composing horizontally and then vertically is the same as composing vertically and then horizontally.
\[\fullgraph{DIAGRAM1.pdf}\]

In terms of diagrams, the interchange law says that the diagram in the bottom right corner above has a well-defined interpretation. In terms of the a traditional notation, it says the following equality holds for composable 2-cells:
$$\left. \frac{\alpha}{\beta} \middle| \frac{\gamma}{\delta} \right. = \frac{\alpha \mid \gamma}{\beta \mid \delta}.$$

\subsubsection{Note (a Word on Choices of Duals)}

There are 8 ways we could interpret our diagrams in a double category, corresponding to the symmetries of the square. These options are cut into 4 by choosing to mark the ``arrow'' direction as vertical, and the ``proarrow'' direction as horizontal when interpreting in an equipment of enriched categories. We will use this convention -- which is used, for example, in \cite{Leinster} and \cite{Cruttwell} -- throughout the rest of the paper, but it is not universal. 

We take the view that composition arrows and proarrows should be read top to bottom and left to right. However, depending on the definitions of certain equipments, this could force horizontal composition of 2-cells to be read right to left. The standard example of an equipment of enriched categories exhibits this. If we define define a profunctor $A \topro B$ to be $A\op \times B \to V$, then with our conventions the horizontal composite of 2-cells in this equipment may be read left to right; but if a profunctor $A \topro B$ is defined to be $A \times B\op \to V$ (so that it corresponds to a Kleisli-like morphism $A \to V^{B\op}$), then horizontal composition of 2-cells must be read the other way around. This, in particular, changes what we call a companion into a conjoint and vice-versa. We will always imagine a profunctor being defined as $A\op \times B \to V$ in this paper.

Ultimately, these are minor issues. But care must be taken to make sure that all conventions align. 

\subsection{Companions, Conjoints, and the Spider Lemma}\label{Sec_SpiderLemma}

The notions of companion and conjoint are of central importance in the theory of double categories. Importantly, a double category for which every arrow has a companion and a conjoint is a proarrow equipment, which proves a useful setting for doing formal category theory. In this section, we'll introduce notation for companions and conjoints that makes their use very intuitive.

\begin{definition}
An arrow $\tinygraph{vRY}$ has a \emph{companion} if there is a proarrow $\tinygraph{hRYl}$ together with two 2-cells $\tinygraph{Bend_BR_RY}$ and $\tinygraph{Bend_LT_RY}$ such that
\[\ingraph{ZigZagLv_RY} = \ingraph{vRY} \quad\quad\text{and}\quad\quad
\ingraph{ZigZagLh_RY} = \ingraph{hRYl}.
\]

Similarly, $\tinygraph{vRY}$ is said to have a \emph{conjoint} if there is a proarrow $\tinygraph{hRYr}$ together with two 2-cells $\tinygraph{Bend_RT_RY}$ and $\tinygraph{Bend_BL_RY}$ such that
\[
\ingraph{ZigZagRv_RY}  = \ingraph{vRY} \quad\quad\text{and}\quad\quad
\ingraph{ZigZagRh_RY} = \ingraph{hYRr}.
\]
\end{definition}

Let's unpack these definitions now. We draw the companion $\nanograph{oY}(f, 1) : \tinygraph{oR} \topro \tinygraph{oY}$ of an arrow $f : \ingraph{vRY}$, if it exists, as a directed horizontal line:
\begin{center}
\ingraph{hRYl}
\end{center}

In arrow notation (transposed), the defining 2-cells (called the unit and counit respectively) of a companion are
\[ \begin{tikzcd}
\tinygraph{oR} \arrow{rr}{\id} \arrow{dd}[swap]{\id} & & \tinygraph{oR} \arrow{dd}{\nanograph{oY}(f,1)}\\
& \beta & \\
\tinygraph{oR} \arrow{rr}[swap]{f}  & & \tinygraph{oY}
\end{tikzcd} \quad\quad\text{and}\quad\quad  \begin{tikzcd}
\tinygraph{oR} \arrow{rr}{f} \arrow{dd}[swap]{\nanograph{oY}(f,1)} & & \tinygraph{oY} \arrow{dd}{\id}\\
& \alpha & \\
\tinygraph{oY} \arrow{rr}[swap]{\id}  & & \tinygraph{oY}
\end{tikzcd}\] 

These satisfy the equalities $\frac{\beta}{\alpha} = \id$ and $\beta \mid \alpha = \id$. Dualizing, we see that $\alpha$ and $\beta$ should be beads forming a corner between $f$ and $\nanograph{oY}(f,1)$. We will suppress the names $\alpha$ and $\beta$ and instead write them as smooth bends:
$$  \ingraph{Bend_BR_RY}\quad\quad\text{and}\quad\quad \ingraph{Bend_LT_RY},$$

We can remember that the companion $\nanograph{oY}(f,1)$ of $f$ is a line pointing to the left, since $f$ appears on the left in $\nanograph{oY}(f,1)$. The equations $\frac{\beta}{\alpha} = \id$ and $\beta \mid \alpha = \id$ then say that we can pull kinks straight.

\begin{align*}
\fullgraph{ZigZagLv_RY} &= \fullgraph{vRY} \\
\fullgraph{ZigZagLh_RY} &= \fullgraph{hRYl}
\end{align*}

In an equipment of enriched categories, an arrow $f$ is a functor, and its companion $\nanograph{oY}(f, 1)$ is the profunctor with components given by the hom $\nanograph{oY}(fa, b)$.

Similarly, we draw the conjoint  $\nanograph{oY}(1, f) : \tinygraph{oY} \topro \tinygraph{oR}$ of $f : \ingraph{vRY}$ as a directed horizontal line:
\begin{center}
\ingraph{hYRr}
\end{center}

We draw the line flowing to the right because $f$ appears on the right in $\nanograph{oY}(1, f)$. The conjoint has its own attendant 2 cells, 

$$ \ingraph{Bend_RT_RY}\quad\quad\text{and}\quad\quad \ingraph{Bend_BL_RY},$$

satisfying their own kink-pulling identities:
\begin{align*}
\fullgraph{ZigZagRv_RY}  &= \fullgraph{vRY}\\
\fullgraph{ZigZagRh_RY} &= \fullgraph{hYRr}
\end{align*}

Companions and conjoints are determined uniquely up to unique isomorphism by the data which describes them. This justifies calling $\ingraph{hRYl}$ \emph{the} companion and $\ingraph{hYRr}$ \emph{the} conjoint of $\ingraph{vRY}$. 

\begin{lemma}
The companion of an arrow is unique up to a unique isomorphism which commutes with its bends.
\end{lemma}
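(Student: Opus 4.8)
This is the standard ``uniqueness of adjoints''-style argument, run in the string calculus. Suppose $f : \tinygraph{oR} \to \tinygraph{oY}$ has two companions: $(J, \beta, \alpha)$ and $(J', \beta', \alpha')$, where in each case the two bends satisfy the kink-pulling identities $\frac{\beta}{\alpha} = \id$, $\beta \mid \alpha = \id$ (and likewise with primes). I would first define the comparison 2-cell $\phi : J \Rightarrow J'$ as the globular 2-cell (identities on both vertical boundaries) obtained by bending the $J$-string along $f$ using one bend of the first companion and then out along $J'$ using the matching bend of the second companion; pictorially, $\phi$ glues the two relevant ``$L$''-shaped beads together along their common $f$-leg. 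Symmetrically, define $\phi^{-1} : J' \Rightarrow J$ using the remaining bend of each companion. The orientations here must be chosen so that the $f$-legs actually match up and so that the leftover legs are the straight (identity) ones; getting this bookkeeping consistent is the one place care is needed, though it is visually immediate in the diagrams of Section~\ref{Sec_SpiderLemma}.

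Next I would show $\phi$ and $\phi^{-1}$ are mutually inverse. Compose them in one order and use the interchange law to rearrange the resulting diagram so that a zig-zag built entirely from the bends of $J'$ (say) sits in the middle; apply the corresponding kink-pulling identity to straighten it, which leaves a zig-zag built from the bends of $J$; straighten that one too, and what remains is the identity 2-cell $\id_J$. The other composite $\phi \circ \phi^{-1} = \id_{J'}$ is symmetric, with the roles of the two companions exchanged. Hence $\phi$ is an isomorphism $J \cong J'$.

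That $\phi$ \emph{commutes with the bends} — i.e.\ that composing $\phi$ (on the appropriate side) with the bends of $J$ recovers the bends of $J'$, and vice versa — is again read off from the construction together with a single application of a kink-pulling identity for one of the two companions: sliding $\phi$ into a bend of $J$ and straightening the zig-zag that appears produces exactly the corresponding bend of $J'$. Finally, for uniqueness, suppose $\psi : J \Rightarrow J'$ is any globular 2-cell commuting with the bends in this sense. Write $\psi = \id_{J'} \circ \psi$, expand $\id_{J'}$ as a zig-zag in the bends of $J'$ (via $\frac{\beta'}{\alpha'} = \id$ or $\beta' \mid \alpha' = \id$, whichever has the right orientation), and slide $\psi$ across the bends using the hypothesis that $\psi$ commutes with them; this converts the $J'$-bends into $J$-bends and exhibits $\psi$ as precisely the composite defining $\phi$. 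Therefore $\psi = \phi$.

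The only genuine obstacle is the orientation bookkeeping in the definitions of $\phi$ and $\phi^{-1}$ — matching the two bend-conventions of the two companions so that the intermediate diagrams are honest zig-zags to which the kink-pulling identities apply — after which each step is a routine diagram manipulation justified by interchange and the defining equalities of a companion.
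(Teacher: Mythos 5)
Your proposal is correct and follows essentially the same route as the paper: you build the comparison cell by gluing a bend of one companion to the matching bend of the other, verify invertibility and compatibility with the bends by straightening kinks, and prove uniqueness by expanding an identity as a zig-zag and sliding the candidate isomorphism across the bends. The orientation bookkeeping you flag is real but routine, and the paper handles it exactly as you describe.
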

\begin{proof}
Suppose an arrow $\ingraph{vRY}$ has two conjoints $\ingraph{hRY}$ and $\ingraph{hRYdash}$ with units $\ingraph{uniqUnit}$ and $\ingraph{uniqUnitdash}$ and counits $\ingraph{uniqCounit}$ and $\ingraph{uniqCounitdash}$ respectively. The following calculations show that the maps $\ingraph{uniqIso}$ and $\ingraph{uniqIsoinv}$ form an isomorphism $\ingraph{hRY} \cong \ingraph{hRYdash}$.
\begin{align*}
    \fullgraph{uniqIsoProof0} &= \fullgraph{uniqZigZag} = \fullgraph{hRY}, \\
    \fullgraph{uniqIsoProof2} &= \fullgraph{uniqZigZagdash} = \fullgraph{hRYdash}.
\end{align*}

An isomorphism $\ingraph{hRYstar}$ with inverse $\ingraph{hRYstarinv}$ is said to commute with the bends of these companions if $\ingraph{uniqIsoCommute} = \ingraph{uniqUnitdash}$ and all the other such equations hold (and similarly for its inverse). The isomorphism $\ingraph{uniqIso}$ commutes with the bends by the kink identities: $\ingraph{uniqIsoCommuteProof} = \ingraph{uniqUnitdash}$ etc. 

Suppose that $\ingraph{hRYstar}$ were an isomorphism which commutes with the bends, then 
$$\fullgraph{uniqIsoProof1} = \fullgraph{uniqZigZagdash} = \fullgraph{hRYdash},$$
so that $\ingraph{hRYstar}$ equals $\ingraph{uniqIso}$. Therefore, $\ingraph{uniqIso}$ is unique.
\end{proof}

The following lemma is a standard elementary result about companions and conjoints. It is easily proved with the string diagrams.

\begin{lemma}
For an arrow $\ingraph{vRY}$ and proarrows $\ingraph{hRY}$ and $\ingraph{hYR}$, any two of the following implies the third:
\begin{enumerate}
\item $\tinygraph{hRY}$ is the companion of $\tinygraph{vRY}$,
\item $\tinygraph{hYR}$ is the conjoint of $\tinygraph{vRY}$,
\item $\tinygraph{hRY}$ is the left adjoint of $\tinygraph{hYR}$ in the category of proarrows.
\end{enumerate}
\end{lemma}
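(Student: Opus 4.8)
The plan is to prove the three implications one at a time, in each case building the missing structure from the data at hand and verifying the resulting equations graphically. Write $f$ for the arrow and $J$, $K$ for the two proarrows; in each case the two hypotheses force $J : A \topro B$ and $K : B \topro A$, where $A$ and $B$ are the source and target of $f$. Recall what the three structures are: a companion structure on $J$ is a pair of bends (a unit and a counit) exchanging the $f$-string with the $J$-string and subject to the two kink identities; a conjoint structure on $K$ is likewise a pair of bends exchanging the $f$-string with the $K$-string; and an adjunction $J \dashv K$ in the bicategory of proarrows is a unit $\id \Rightarrow JK$ together with a counit $KJ \Rightarrow \id$ subject to the two triangle identities. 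The guiding observation is that all three are the \emph{same shape} of data --- a pair of 2-cells with two straightening identities --- drawn with the strings routed differently, so each implication amounts to re-bending what one is given into what one wants, with interchange used to slide beads past one another.

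First I would prove $(1)\wedge(2)\Rightarrow(3)$, a direct construction: the companion's unit bend turns a point into a $J$-string together with an $f$-string, and the conjoint's unit bend turns that $f$-string into a $K$-string, so the composite is a 2-cell $\id \Rightarrow JK$, the adjunction unit; dually the companion's and conjoint's counit bends paste along an $f$-string to give the counit $KJ \Rightarrow \id$. Each of the two triangle identities then becomes a diagram in which the bends line up into zig-zags, and straightening those zig-zags by the appropriate companion and conjoint kink identities reduces the diagram to the identity 2-cell on $J$, resp.\ on $K$.

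For $(1)\wedge(3)\Rightarrow(2)$ I would run this in reverse. The companion's bends give, in any diagram, a way to replace a $J$-string by the corresponding $f$-string and vice versa, and the companion kink identities say these two substitutions are mutually inverse. Performing the substitution on the adjunction unit and counit turns them into a pair of bends exchanging the $f$-string with the $K$-string --- the candidate conjoint unit and counit --- and performing it on the two triangle identities turns them into the two conjoint kink identities, so $K$ with these bends is the conjoint of $f$. The remaining implication $(2)\wedge(3)\Rightarrow(1)$ is symmetric: one can repeat the argument with the roles of companion and conjoint exchanged, or observe that reversing every proarrow passes to a double category in which companions and conjoints are swapped and $J \dashv K$ becomes $K \dashv J$, so that $(2)\wedge(3)\Rightarrow(1)$ there is the case already proved.

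The main obstacle I expect is bookkeeping rather than anything conceptual, and it lives in the last two implications. There one must \emph{produce} a complete companion or conjoint structure and check \emph{both} of its kink identities, and the check genuinely consumes the adjunction's triangle identities, not merely the bends already in hand. Care is needed to see that each pasted diagram has exactly the asserted boundary (the stray $f$-strings of the non-globular bends must cancel), that the $J$-for-$f$ substitution really is inverse to the $f$-for-$J$ one, and that the argument never quietly appeals to a conjoint, resp.\ companion, that has not yet been built --- the points at which a careless picture could go wrong.
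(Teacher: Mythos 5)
Your proposal is correct and follows essentially the same route as the paper: the adjunction unit and counit are obtained by pasting the companion and conjoint bends along the $f$-string, the triangle identities follow by pulling the resulting zig-zags straight, and the remaining implications are handled by composing the adjunction (co)unit with the bends already in hand and verifying the kink identities from the existing kinks together with the triangle identities. The only cosmetic difference is which of the two symmetric implications you spell out explicitly ($(1)\wedge(3)\Rightarrow(2)$ rather than $(2)\wedge(3)\Rightarrow(1)$), and your cautions about boundary bookkeeping are exactly the points the paper's pictures are verifying.
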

\begin{proof}
First let's show that 1 and 2 imply 3. We define the unit and counit by bending all the way around:
$$\ingraph{Bend_Unit} \quad\quad\text{and}\quad\quad \ingraph{Bend_Counit}.$$

The zig-zag identities (also known as the triangle identities) then follow simply by pulling the strings straight:
$$\fullgraph{Adj_ZigZag1} = \fullgraph{ZigZagRh_RY} = \fullgraph{hYRr}.$$

The other side goes similarly: $\ingraph{Adj_ZigZag2} = \ingraph{ZigZagLh_RY} = \ingraph{hRYl}$. 

Now, suppose 2 and 3, and we'll show that $\tinygraph{hRY}$ is the companion of $\tinygraph{vRY}$. Let's write the unit and counit of the adjunction $\tinygraph{hRY} \dashv \tinygraph{hYRr}$ as $\ingraph{Adj_Unit}$ and $\ingraph{Adj_Counit}$. Then  $\ingraph{Adj_BendDown}$ and $\ingraph{Adj_BendUp}$ are the cells making $\tinygraph{hRY}$ into the companion of $\tinygraph{vRY}$. These satisfy the kink identities thanks to the kinks of $\tinygraph{hYRr}$ and the zig-zags of $\tinygraph{hRY} \dashv \tinygraph{hYRr}$:
$$\fullgraph{Adj_ZigZag2_1} = \fullgraph{Adj_ZigZag2_2} = \fullgraph{hRY},$$
$$\fullgraph{Adj_ZigZag3_1} = \fullgraph{ZigZagRv_RY} = \fullgraph{vRY}.$$

That 1 and 3 imply 2 follows similarly.
\end{proof}

\begin{definition}
A \emph{proarrow equipment} is a double category where every arrow has a conjoint and a companion.
\end{definition}

The following lemma is a central elementary result of the theory of equipments:

\begin{lemma}[Spider Lemma]\label{Lem_SpiderLemma}
In an equipment, we can bend arrows. More formally, there is a bijective correspondence between diagrams of form of the left, and diagrams of the form of the right:
$$\fullgraph{SpiderLemma1} \simeq \fullgraph{SpiderLemma2}.$$
\end{lemma}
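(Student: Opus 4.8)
The plan is to construct the two directions of the bijection explicitly using the bends of companions and conjoints, and then verify that the two constructions are mutually inverse by pulling kinks straight. Since an equipment has all companions and conjoints, every vertical string entering or leaving the top or bottom of the diagram can be ``bent'' to the side. Concretely, given a 2-cell of the left-hand form — with some vertical legs on the boundary — I would define the corresponding right-hand 2-cell by precomposing and postcomposing with the appropriate bend cells: each incoming vertical arrow $\ingraph{vRY}$ on the top gets bent using the companion or conjoint unit, and each outgoing vertical arrow on the bottom gets bent using the counit. This turns the vertical boundary legs into horizontal boundary legs (companions for one orientation, conjoints for the other), producing a 2-cell of the right-hand shape. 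The inverse map bends the horizontal legs back into vertical ones using the other bend cells.

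The key steps, in order, are: (1) fix notation for the bend 2-cells and record the four kink identities from the definitions of companion and conjoint (these are exactly $\frac{\beta}{\alpha} = \id$, $\beta \mid \alpha = \id$ and their conjoint analogues, as displayed in Section~\ref{Sec_SpiderLemma}); (2) define the forward map $\Phi$ by whiskering a left-hand diagram with the bend cells, carefully matching the orientations so that a vertical string that should become a companion is bent with the companion unit/counit and one that should become a conjoint is bent with the conjoint unit/counit; (3) define the backward map $\Psi$ symmetrically, bending the horizontal legs back; (4) check $\Psi\Phi = \id$ and $\Phi\Psi = \id$ by observing that each round trip inserts a zig-zag of bends along a string, which straightens to the identity by the kink identities from step (1); and (5) note naturality/functoriality in the ambient diagram, i.e. that these bends can be performed locally without disturbing the rest of the picture, so the correspondence is compatible with composition.

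The main obstacle I expect is bookkeeping rather than conceptual depth: one has to be scrupulous about which of the four bend cells is applied at each boundary leg, since bending the wrong way would produce the conjoint where the companion is wanted (the Note on choices of duals flags exactly this sort of orientation hazard). In the string-diagram language this is essentially automatic — the picture on each side of ``$\simeq$'' dictates the shape — but to make the argument rigorous one should either (a) argue purely pictorially and appeal to the already-proved kink identities, treating the figures \ingraph{SpiderLemma1} and \ingraph{SpiderLemma2} as the formal statement, or (b) unwind a representative small case (say one vertical leg on top, one on the bottom) in arrow notation and then remark that the general case is the same local move repeated. I would take route (a) for the main proof and perhaps include the small arrow-notation computation as reassurance, since the whole point of the paper is that such deformations are legitimate once soundness (proved later) is in hand; here, at the elementary stage, the kink identities alone suffice.
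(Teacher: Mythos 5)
Your proposal is correct and takes essentially the same approach as the paper: both define the correspondence by whiskering with the companion/conjoint (co)unit bend cells and then invoke the kink identities to show the two directions are mutually inverse. Your write-up is merely a more explicit elaboration of the paper's terse one-paragraph argument, including the same orientation-bookkeeping caveat.
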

\begin{proof}
The correspondence is given by composing the outermost vertical or horizontal arrows by their companion or conjoint (co)units, as suggested by the slight bends in the arrows above. The kink identities then ensure that these two processes are inverse to each other, giving the desired bijection.
\end{proof}

The Spider Lemma is called the ``Central Lemma'' on the $n$Lab article \cite{nLab:Equipment}. With this lemma in hand, we can begin to really use the graphical notation. Whenever we invoke the spider lemma, we will simply say that we are ``bending arrows''. Even better, the assignment of arrows to their companions (or conjoints) is functorial, which means that we can ``slide beads around bends''.

Given $\ingraph{v2CellRY}$, define its \emph{conjoint mate} $\ingraph{h2CellRYr}$ to be $\ingraph{ZigZag2Cell_Rh_RY}$. The kink identities say that the conjoint mate of the identity is the identity of the conjoint, and $\ingraph{ZigZag2Cell_Funct2} = \ingraph{ZigZag2Cell_Funct1}$. Therefore, the assignment of a bead to its conjoint mate is functorial. Thanks to the spider lemma, this functor is fully faithful.

We can now slide beads around bends thanks to the following equations:
$$\fullgraph{Mate1} = \fullgraph{Mate3} = \fullgraph{Mate2}.$$
In fact, any topological deformation of a diagram that preserves the fact that it is a diagram induces an equality in the equipment being described. We will prove this in Sections \ref{Sec_DoubleDiagrams} and \ref{Sec_EquipmentDiagrams}.

\subsection{Zig-Zag Adjunctions and Hom-set Adjunctions}
It is a classical fact of category theory that an adjunction $f \dashv g : A \rightleftarrows B$ may be defined 2-categorically or profunctorially. That is to say, we could use natural transformations $\eta: \id \rightarrow gf$ and $\epsilon: fg \rightarrow \id$ (which we will call a zig-zag adjunction, after the coherence conditions), or a natural isomorphism $\psi : B(f, 1) \cong A(1, g)$. This equivalence holds in any proarrow equipment, which we can now show quickly and intuitively with string diagrams.

Suppose we have an adjunction $\ingraph{vRY} \dashv \ingraph{vYR}$, given by the vertical cells $\ingraph{HomSetAdj_Unit}$ and $\ingraph{HomSetAdj_Counit}$, satisfying the zig-zag (or, triangle) identities
$$\ingraph{HomSetZigZag2} = \ingraph{vRY} \quad\quad\text{and}\quad\quad  \ingraph{HomSetZigZag1} = \ingraph{vYR}.$$

By bending the unit and counit, we get the horizontal cells $\ingraph{HomSetAdj_IsoBend1}$ and $\ingraph{HomSetAdj_IsoBend2}$. Bending the zig-zag identities shows that these maps are horizontally inverse to each other
$$\ingraph{HomSetAdj_IsoBendEq2} = \ingraph{HomSetAdj_IsoBendEq12} = \ingraph{HomSetAdj_IsoBendEq13} = \ingraph{hRYr},$$
$$\ingraph{HomSetAdj_IsoBendEq1} = \ingraph{HomSetAdj_IsoBendEq22} = \ingraph{ZigZagLh_RY} = \ingraph{hRYl},$$
and therefore define the natural isomorphism $\ingraph{hRYr}  \cong \ingraph{hRYl} $ we wanted.

Going the other way, suppose $\ingraph{HomSetIso1}$ is a natural isomorphism with horizontal inverse $\ingraph{HomSetIso2}$, meaning
\begin{align*}
    \ingraph{HomSetIsoEq2} = \ingraph{hRYl} \quad\quad\text{and}\quad\quad  \ingraph{HomSetIsoEq1} = \ingraph{hRYr}. \tag{1}
\end{align*}

Then we can define a unit $\ingraph{HomSetAdj_BendUnit}$ and counit $\ingraph{HomSetAdj_BendCounit}$ by bending. These satisfy the zig-zag identities by pulling straight and using (1):
$$\ingraph{HomSetAdj_BendZig2} = \ingraph{HomSetAdj_BendZag2} = \ingraph{ZigZagRv_YR} = \ingraph{vYR},$$
$$\ingraph{HomSetAdj_BendZig1} = \ingraph{HomSetAdj_BendZag1} = \ingraph{ZigZagLv_RY} = \ingraph{vRY}.$$

Though this proof can be discovered graphically, it specializes to the usual argument in the case that the equipment is an equipment of enriched categories.

\section{Double Diagrams}\label{Sec_DoubleDiagrams}

In this and the coming sections, we prove the correctness of our diagrammatic language for double categories and equipments. More specifically, we will show that if a diagram describes some composite 2-cell in a double category, then any deformation of that diagram describes the same 2-cell. We will use definitions and methods similar to Joyal and Street in \cite{Joyal}. The plan is to decompose a complicated diagram into simple pieces which may then be associated to a tile-order in the sense of Dawson and Pare (see \cite{Dawson1993} and \cite{Dawson}). By a theorem of Dawson and Pare in \cite{Dawson1993}, the composite of these simple diagrams (if it exists) is unique. Since a suitably small deformation of the original diagram will generate the same tile-order, it will have the same composite. Therefore, the diagrams will have invariant meaning under deformation.

We break this argument into several steps.
\begin{itemize}
    \item In Section 2.1, we recall the preliminary defintions of to Joyal and Street in \cite{Joyal}.
    \item In Section 2.2, we extend those definitions to double diagrams for double categories and define the value of a double diagram.
    \item In Section 2.3, we investigate tilings of double diagrams and prove that the value of a double diagram, if it exists, is invariant under deformation (Lemma \ref{Lem_ValueInvariant}).
    \item In Section 2.4, we show that every double diagram which is \emph{neat} admits a composable tiling, and therefore has a value invariant under deformation (Theorem \ref{Thm:InvariantValue}).
\end{itemize}

\subsection{Graphs and Diagrams}
These definitions follow and extend those of \cite{Joyal}. First, a \emph{graph} $(G, G_0)$ is defined to be a pair of a Hausdorff space $G$ and a finite set of points $G_0$ (called \emph{nodes}) in $G$ such that $G - G_0$ is a one dimensional manifold. Connected components $e \in \pi_0(G - G_0)$ are called \emph{edges}, and we let $G_1 = \pi_0(G - G_0)$ be the set of edges. For a node $n \in G_0$, its degree is defined to be 
$$\deg n := \inf_{n \in U \text{ open}} |\pi_0(U - \{n\})|,$$
the number of connected components within a sufficiently small open neighborhood $U$ of $n$. A \emph{graph with boundary} is a compact graph $(G, G_0)$ together with a subset $\partial G \subseteq G_0$ of degree one nodes. From now on all graphs will have boundary.

We will `draw' our graphs in rectangles in the plane. To prepare for this, let's define some useful notions pertaining to rectangles.
\begin{definition}
Define a \emph{rectangle} to be a subset of the real plane $\R^2$ of the form $[a, b] \times [c, d]$ for $a < b$, $c < d$ real numbers.
\begin{itemize}
    \item If $R$ is a rectangle, define $\partial_v R := [a, b] \times \{c,d\}$ and $\partial_h R := \{a, b\} \times [c, d]$.
    \item Define $vR$ to be the vertical axis $\{a\} \times [c, d]$ and $hR$ to be the horizontal axis $[a, b] \times \{c\}$ and let $p_v : R \rightarrow vR$ and $p_h : R \rightarrow hR$ be the projections respectively.
    \item   For points $x$ and $y$ in a rectangle $R$, we'll say that $x \geq_v y$ (read:\ $x$ is above $y$) if $p_v(x) \geq p_v(y)$, and similarly $x \leq_h y$ ($x$ is left of or before $y$) if $p_h(x) \leq p_h(y)$.
\end{itemize}
\end{definition}
 
 In general, if $X$ is a subspace then we will denote its interior by $\interior X$. Now we are ready to define the notion of a diagram.

\begin{definition} \label{Def_VerticalDiagram}
Let $(G, G_0, \partial G)$ be a graph with boundary, and $R$ a rectangle. A \emph{vertical diagram} of $G$ in $R$ is an embedding $\varphi: G \rightarrow R$, differentiable when restricted to each edge, satisfying the following two properties:
\begin{enumerate}
    \item $\varphi(\partial G) \subseteq \interior \partial_v R$, and if $\varphi(x) \in \partial R$, then $x \in \partial G$.
    \item (Progressivity) For all edges $e \in G_1$, the composite $e \hookrightarrow G \xrightarrow{\varphi} R \xrightarrow{p_v} vR$ is injective. 
\end{enumerate}
\end{definition}

The first condition ensures that boundary nodes of $G$ fall on the boundary of $R$, and that $G$ only intersects the boundary on these nodes. The second condition is called \emph{progressivity}; it ensures that edges can never backtrack vertically, and thereby excludes cycles. If $\varphi : G \rightarrow R$ is a vertical diagram, then we can orient the edges of $G$ using the $\geq_v$ relation. If $n$ and $m$ are nodes of $G$ connected by $e$ with $\varphi(n) \geq_v \varphi(m)$, then orient $e$ so that $e : n \rightarrow m$. Now that the edges of $G$ have been oriented, we can define the set $\inV(n)$ of incoming edges into the node $n$, and $\outV(n)$ of edges outgoing from $n$. In fact, we can order the sets of incoming and outgoing edges from a node using the notion of regular levels.

Every $u \in vR$ (or, equivalently, every number between $c$ and $d$) determines a horizontal line $[a,b] \times \{u\}$, which we will identify with $u$ so long as it doesn't cause confusion. If this line does not intersect any node of $G$, then $u$ is called a \emph{vertical regular level} of the diagram $\varphi$. For every node $n$, we can choose a vertical regular level $u$ such that $u \geq_v n$. Then every edge of $e \in \inV(n)$ will hit $u$ exactly once in a point $x_e$ by progressivity. We can then order the edges $e$ and $f$ of $\inV(n)$ by $x_e \leq_h x_f$, the order they intersect $u$. Since $\varphi$ is an embedding, this does not depend on the choice of regular level. Similarly, we can choose a regular level below $n$ to order $\outV(n)$.

Finally, given a diagram $\varphi : G \rightarrow R$, we will define $G_2$ to be $\pi_0(R - \varphi(G))$ and call them \emph{regions}. These will be interpreted as the objects of our double category. For every edge $e \in G_1$, define $\inE(e)$ to be the region containing a point $x$ such that for some $p \in e$, $x \leq_h \varphi(e)$. Similarly, define $\outE(e)$ to be the region with a point $x$ such that for some $p \in e$, $\varphi(p) \leq_h x$. These functions on edges are well defined because $\varphi$ is an embedding and by progressivity.

All these definitions should be repeated, suitably transposed, for their corresponding \emph{horizontal} notions. All that is needed is to replace $v$ by $h$ and $h$ by $v$ (and, take care, since $\geq_v$ must be replaced by $\leq_h$ and vice-versa).

\subsection{Double Graphs and Double Diagrams}

Now we come to the new definitions. A \emph{double graph} $G = vG \cup hG$ is a pair of graphs $(vG, G_0, \partial_v G)$ and $(hG, G_0, \partial_h G)$ with the same set of nodes, but with disjoint boundaries $\partial_v G \cap \partial_h G = \emptyset$, called the vertical and horizontal graphs respectively. We'll define $\partial G = \partial_v G \cup \partial_h G$.

\begin{definition}\label{Def_DoubleDiagram}
Let $G$ be a double graph and $R$ a rectangle. A \emph{double diagram} of $G$ in $R$ is an embedding $\varphi : G \rightarrow R$ such that
\begin{enumerate}
    \item The restriction $\varphi_v : vG \rightarrow R$ is a vertical diagram,
    \item The restriction $\varphi_h : hG \rightarrow R$ is a horizontal diagram,
    \item (Interpretability) For every node $n \in G_0 - \partial G$, there is a rectangle $R_n$ which contains $n$ in its interior and which satisfies the following conditions:
    \begin{enumerate}
        \item $R_n$ contains no nodes other than $n$.
        \item The only edges which intersect $R_n$ are those incident to $n$, and each edge which intersects $\partial R_n$ intersects it only once.
        \item A vertical edge which intersects $R_n$ intersects $\partial R_n$ only on its horizontal boundary. Similarly, a horizontal edge which intersects $R_n$ intersects $\partial R_n$ only on its vertical boundary.
    \end{enumerate}
\end{enumerate}
\end{definition}

The first two conditions ensure that $\varphi$ is a diagram for both the horizontal and vertical graphs of $G$. The third condition ensures that double diagrams can be interpreted in a double category. In the traditional notation, this would amount to saying that if the boundary of a rectangle is made by laying out horizontal and vertical line segments, then when considering the half of the rectangle above or below the center, no vertical line segment appears between two horizontal ones. While this is a nontrivial restriction on possible double diagrams, note that it can be ensured in practice by drawing each node as a little square so that the vertical arrows enter the top and leave the bottom of the square, and the horizontal arrows enter the right and leave the left. This gives a picture like that defining $\mu(n)$ in the following definition.

\begin{definition}
A \emph{valuation} $\mu : \varphi \to \Da$ of a double diagram $\varphi : G \rightarrow R$ in a double category $\Da$ is a bunch of functions
\begin{align*}
\mu : G_2 &\rightarrow \textbf{ob}\Da \\
\mu : vG_1 &\rightarrow \textbf{ar}_v\Da \\
\mu : hG_1 &\rightarrow \textbf{ar}_h\Da \\
\mu : G_0 - \partial G &\rightarrow 2\Da
\end{align*}
where the sets on the right above are the objects, vertical arrows, horizontal arrows, and 2-cells of $\Da$ respectively. These are required to satisfy a bunch of coherence conditions that say that all the domain and codomain conditions hold. Specifically, if $n \in G_0$ is a node and its vertical domain $\text{in}_v(n)$ consists of the edges $\{t_i\}$, vertical codomain $\text{out}_v(n)$ the edges $\{b_i\}$, horizontal domain $\text{in}_h(n)$ the edges $\{l_i\}$, and horizontal codomain $\text{out}_h(n)$ the edges $\{r_i\}$, then 
\[\begin{tikzcd}
\mu(\text{in}(l_1)) \arrow{r}{\mu l_1} \arrow{d}[swap]{\mu t_1} & \cdots \arrow{r}{\mu l_n} & \mu(\text{out}(l_n)) \arrow{d}{\mu b_1} \\
\vdots \arrow{d}[swap]{\mu t_k} & \mu(n) & \vdots \arrow{d}{\mu b_q}\\
\mu(\text{in}(r_1)) \arrow{r}[swap]{\mu r_1} & \cdots \arrow[swap]{r}{\mu r_m} & \mu(\text{out}(r_m)) 
\end{tikzcd}\]

Around the node $n$, the diagram is the dual of this square. The square above is called the \emph{signature} of $n$.

If the valuation image of $\varphi$ under the valuation $\mu$ can be composed in $\Da$, then we call this composite $\mu(\varphi)$ the \emph{value} of $\varphi$ under $\mu$. 
\end{definition}

Though a valuation certainly gives a collection of 2-cells in $\Da$, there is no guarantee that they are even compatible -- in the sense of forming a tiling of a rectangle, a pasting diagram in the usual notation -- much less composable. The compatible arrangements of 2-cells in a double category form rectangular tilings of rectangular regions of the plane. Since we have a rectangular region of the plane in mind, namely $R$, we will tile it and induce a valuation of this tiling. This will show that indeed $\mu$ gives us a compatible arrangement of double cells, and we may then inquire into its composability.

Our goal will be to show that values of double diagrams are invariant under deformation. Let's make that notion of deformation precise now, following \cite{Joyal}.
\begin{definition}
Let $G$ be a double graph. A deformation $h$ between two double diagrams $\varphi,\,\phi : G \rightarrow R$ is a continuous function $h : G \times [0,1] \rightarrow R$ such that 
\begin{itemize}
    \item $h(-,0) = \varphi$ and $h(-, 1) = \phi$,
    \item For all $t \in [0,1]$, $h(-, t) : G \rightarrow R$ is a double diagram.
\end{itemize}
\end{definition}

It is easy to see that the incoming and outgoing edges or regions do not depend on the choice of $t$ for a deformation, and that therefore valuations may be transported across deformations as well. If we have a valuation $\mu$ of $\varphi = h(-,0)$, then we'll denote by $\mu$ as well the induced valuation at $h(-,t)$. It remains to show that if $h$ deforms $\varphi$ into $\phi$, then the values $\mu(\varphi) = \mu(\phi)$ if they exist.

\subsection{Tilings and Decomposition}\label{SecTilings}
In order to show that values of double diagrams are invariant under deformation, we will show that double diagrams generate tilings that may be interpreted as a compatible arrangement of double cells (in the sense of \cite{Dawson1993}). Since, as Dawson and Pare showed in \cite{Dawson1993}, the composites of compatible arrangements of double cells are unique if they exist, and since a suitably small deformation of a diagram will fit within the same tiling, our double diagrams will have unique values that are invariant under deformation.

\begin{definition}
Given a rectangle $R$ in the plane, a \textit{tiling} $T = \{R_i \mid i \in I\}$ of $R$ is a finite set of rectangles whose union is the whole of $R$, $\displaystyle \bigcup_{i \in I} R_i = R$, and which only intersect on their boundaries, $R_i \cap R_j \subseteq \partial R_i \cap \partial R_j$ for $i \neq j$.
\bigskip

If $\varphi : G \to R$ is a double diagram and $T$ is a tiling of $R$, then $T$ is said to be \emph{admissible} for $\varphi$ if $\varphi$ restricts to a double diagram $\varphi_i$ on each $R_i \in T$ in an obvious manner, and if each $\varphi_i$ contains at most one node and one connected component of the image $\varphi(G)$. In particular, this means that no horizontal edge of $\varphi$ can intersect a vertical edge of any $R_i$, and similarly for vertical edges.
\end{definition}

\begin{proposition}\label{Lem:DiagramsTileable}
Every double diagram $\varphi : G \to R$ admits an admissible tiling $T_{\varphi}$.
\end{proposition}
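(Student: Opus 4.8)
The plan is to build the tiling by cutting $R$ along a finite grid of vertical and horizontal lines chosen to separate all the interesting features of $\varphi$ from one another. First I would invoke the \emph{interpretability} condition of Definition \ref{Def_DoubleDiagram}: each non-boundary node $n$ sits inside a rectangle $R_n$ whose interior meets only the edges incident to $n$, and in the controlled way prescribed by condition (c). Shrinking each $R_n$ if necessary, I may assume the $R_n$ are pairwise disjoint and disjoint from $\partial R$, and that each $R_n$ is a product of open intervals in the $x$- and $y$-coordinates. Then I would use \emph{progressivity} of both $\varphi_v$ and $\varphi_h$ (conditions 1 and 2 of Definition \ref{Def_DoubleDiagram}, via Definition \ref{Def_VerticalDiagram}): away from the nodes, every vertical edge is a monotone-in-$y$ arc and every horizontal edge is a monotone-in-$x$ arc, so each such arc is the graph of a differentiable function over its projection. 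By compactness these arcs form a finite family, and I can choose finitely many vertical regular levels and horizontal regular levels that, together with the sides of the boxes $R_n$, partition $R$ into a grid of closed subrectangles; I would further subdivide so that within any single grid cell each surviving arc is monotone and crosses the cell from one side to the opposite side, and no two arcs cross inside a cell (using that $\varphi$ is an embedding, so crossings only occur at nodes, which have been isolated inside the $R_n$).

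Next I would assemble the tiling $T_\varphi$ from this grid. The cells come in three flavors: (i) a cell equal to some $R_n$, containing exactly the one node $n$ and the incident edge-germs — by interpretability condition (c) the vertical edges leave through the horizontal boundary and the horizontal edges leave through the vertical boundary, so $\varphi$ restricts to a legitimate double diagram there with a single node and a single connected component; (ii) a cell meeting $\varphi(G)$ in a single arc running straight across (either a vertical arc from top to bottom or a horizontal arc from left to right), which is a double diagram with no node and one connected component; (iii) an empty cell, meeting $\varphi(G)$ not at all. In cases (ii) and (iii) the restriction is visibly a double diagram with at most one node and at most one connected component, and in case (ii) a vertical edge never touches a vertical side of the cell and a horizontal edge never touches a horizontal side, as required by admissibility. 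The only remaining point is the boundary nodes in $\partial G$: these lie in $\interior \partial_v R$ or $\interior \partial_h R$ by condition 1 of Definition \ref{Def_VerticalDiagram} and its horizontal transpose, so by choosing regular levels near the sides of $R$ I can arrange that the cells along $\partial R$ each contain at most one boundary node attached to a single arc, which still counts as one connected component.

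The main obstacle will be the bookkeeping in case (ii)–(iii) — verifying that after enough subdivision every grid cell really does meet $\varphi(G)$ in at most one connected component with the arc going straight across, rather than, say, two disjoint arcs or an arc that enters and exits the same side. Two disjoint arcs in one cell are eliminated by refining the grid (finitely many arcs, so finitely many refinements suffice); an arc entering and exiting the same vertical side would contradict progressivity of $\varphi_h$ restricted to that arc (its $p_v$-projection, for a horizontal arc, would be non-injective — wait, the relevant projection is $p_h$ for a horizontal edge) composed with the observation that horizontal edges are monotone in $x$, hence cannot turn around inside a cell; symmetrically for vertical arcs. I would package this as: choose the $x$-coordinates of the grid to include the $p_h$-images of all node-boxes' corners and enough regular levels that between consecutive ones no arc's monotone behavior changes, and dually for $y$. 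Once this is set up, the verification that each $\varphi_i$ is a double diagram with at most one node and one component is routine, and $T_\varphi$ is the desired admissible tiling.
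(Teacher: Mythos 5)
Your plan to cut $R$ along a grid of vertical and horizontal lines has a gap at the edge-tiling step, and that gap is exactly what the paper's construction is designed around. Progressivity of $\varphi_v$ only makes a vertical edge strictly monotone in the $y$-coordinate; its $x$-coordinate is free to drift arbitrarily far left and right as it descends (differentiability bounds $|dx/dy|$ but does not bound the total drift). A vertical edge that sweeps across an $x$-interval will cross any vertical grid line whose $x$-value lies in that interval. In the grid cell on either side of such a crossing, the restriction of $\varphi$ has a vertical edge meeting $\partial R_i$ on a left/right side, so that boundary point is not in $\interior \partial_v R_i$ and the restriction fails to be a vertical diagram, violating admissibility. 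Refining the grid does not fix this: within a row of height $h$ the edge drifts at most $sh$, but the union of these per-row $x$-ranges over all rows still covers the edge's full drift range, so some row always straddles the offending grid line. Your observation that an arc cannot enter and leave through the \emph{same} side is true but beside the point; the failure mode is entering through the top and leaving through the left, which progressivity does not forbid.

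The paper avoids this by never using a global grid near an edge. After isolating each node in its $R_n$, it surrounds each remaining edge with a disjoint $\epsilon$-tube and tiles the edge \emph{inside its tube} with rectangles of width $w = \epsilon/(2s)$ (where $s$ bounds the slope), placing each rectangle so that the edge enters through the \emph{center} of the appropriate side. The centering is essential: it bounds the edge's excursion within the tile so it exits on the opposite side. A grid gives no control over where an edge enters a tile, which is why the grid cannot be made to work. The remainder of $R$ is then tiled by local rectangle completion, not by lines spanning all of $R$.

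A secondary concern: your flavor (i) assumes each $R_n$ survives as a single grid cell, but grid lines coming from other node-rectangles $R_m$ or from regular levels will in general cut through the interior of $R_n$. This is patchable (take $R_n$ itself as a tile and only grid-cut its complement, or pre-shrink the node-rectangles so their coordinate projections form a laminar family), but the edge-drift issue above is not patchable within a grid, so a genuinely different tiling strategy for the edges is needed.
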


\emph{Proof.} By the conditions of a double diagram, each node $n$ of $\varphi$ comes equipped with a rectangle $R_n$ only containing $\varphi(n)$ onto which $\varphi$ restricts. We will then remove $\displaystyle \bigcup_{\text{nodes } n} \interior R_n$ from the original rectangle $R$ and work there. We will cover each edge of $\varphi$, restricted to $R - \displaystyle \bigcup_{\text{nodes } n} \interior R_n$, with an $\epsilon$-tube so that no two tubes interact. Working within these tubes, we will tile each edge separately. Removing the interiors of these tiles, we are left to tile the empty portions of the diagram which can be done naively.

\comment{\begin{lemma}
If $\varphi : G \to R$ is a double diagram and $n \in G_0$ is an interior node of $\varphi$, then for every $\epsilon > 0$ there exists a rectangle $R_n$ with area less than or equal to $\epsilon^2$ containing $\varphi(n)$ such that $\varphi$ restricts to a double diagram on $\varphi\inv(R_n)$.
\end{lemma}
\begin{proof}
By condition 3 on $\varphi$, there exists a rectangle $R_n$ around $n$ onto which $\varphi$ restricts. Clearly, $\varphi$ will also restrict onto $\lambda R_n$ with $0 < \lambda < 1$ a scaling constant. 
\end{proof}}

Choose neighborhoods $R_n$ for each node $n$ of $\varphi$ so that the $R_n$ do not intersect. On the complement $R - \displaystyle \bigcup_{\text{nodes } n} \interior R_n$, each of the edges $e$ of $\varphi$ are disjoint since $\varphi$ is an embedding. Therefore, since the edges are closed and do not intersect except at the nodes, there exists an $\epsilon > 0$ that each two sets $E_e = \{r \in R \mid d(p_i(r), p_i(e)) < \epsilon\}$ for $e \in hG_1$ and $i = h$ or $e \in vG_1$ with $i = v$ are disjoint. In other words, we surround each edge by a tube which contains all points vertically or horizontally (depending on the type of edge) within $\epsilon$ of the edge, in such a way that each two tubes are disjoint. We can then tile each edge separately within this $\epsilon$-tube.

\begin{lemma}
If $e : [0,1] \to \R^2$ is a horizontally (resp. vertically) progressive curve, an ``edge'', and if $\epsilon > 0$, then $\im e$ may be covered by rectangles within the $\epsilon$-tube $E_e$, and $\im e$ will only intersect these rectangles on their vertical (resp. horizontal) edges.
\end{lemma}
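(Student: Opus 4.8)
The plan is to strip the statement down to an elementary fact about graphs of continuous functions and then cut the graph into pieces of small oscillation. By transposing the two coordinate axes the vertically progressive case is word-for-word the horizontally progressive one, so I would argue only for a horizontally progressive edge $e : [0,1] \to \R^2$, with coordinate functions $e = (e_h, e_v)$. The first step: progressivity says $e_h = p_h\circ e$ is injective, hence (being a continuous injection of a compact interval) a strictly monotone homeomorphism onto an interval $[x_0, x_1]$; after reparametrizing we may take it increasing, so that $\im e$ is exactly the graph $\{(x, f(x)) : x_0 \le x \le x_1\}$ of the continuous function $f := e_v\circ e_h^{-1}$.

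Next I would invoke uniform continuity of $f$ to pick a partition $x_0 = a_0 < a_1 < \cdots < a_k = x_1$ fine enough that on each $I_j := [a_{j-1}, a_j]$ the oscillation $M_j - m_j$ of $f$ is small relative to $\epsilon$ (here $m_j := \min_{I_j} f$, $M_j := \max_{I_j} f$), and then set
\[
R_j \ :=\ I_j \times [\, m_j - \delta,\ M_j + \delta \,]
\]
for a small fixed $\delta > 0$. There are three things to verify, none of them difficult: (i) the $R_j$ cover $\im e$, since the arc of $\im e$ over $I_j$ has second coordinate in $[m_j, M_j] \subseteq [m_j-\delta, M_j+\delta]$ and $\bigcup_j I_j = [x_0,x_1]$; (ii) $R_j \subseteq E_e$, which falls out of the oscillation bound and the smallness of $\delta$ directly from the definition of the tube (and incidentally gives pairwise disjoint interiors: consecutive $R_j$ meet only along a segment of the common vertical line $x = a_j$, and non-consecutive ones not at all); and (iii) $\im e$ meets $\partial R_j$ only along its vertical edges. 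Point (iii) is the content of the lemma, and the $\pm\delta$ padding is exactly what makes it work: over the interior of $I_j$ the graph keeps its second coordinate in $[m_j, M_j] \subset (m_j - \delta, M_j + \delta)$, so it never touches the horizontal edges $I_j \times \{m_j-\delta,\, M_j+\delta\}$ of $R_j$; and because $f$ is a function, $\im e$ meets each of the vertical lines $x = a_{j-1}$ and $x = a_j$ in a single point, landing in the interior of the corresponding vertical edge of $R_j$ because $f(a_{j-1}), f(a_j) \in [m_j, M_j]$. So $\im e \cap \partial R_j$ is two points, both on $\partial_h R_j$.

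I do not expect a serious obstacle: the lemma is really just packaging uniform continuity. The only care needed is bookkeeping in step (iii) --- choosing the $\delta$-padding so the curve enters and leaves each tile transversally through its left and right sides and touches nothing else --- and making sure the constants are pinned against whichever precise form of the tube $E_e$ one adopts. A minor auxiliary point, needed for the use of this lemma in the proof of Proposition~\ref{Lem:DiagramsTileable} rather than for the statement above, is to line up the extreme partition points $a_0, a_k$ with the boundaries of the already-excised node-neighbourhoods so that these edge-tiles dovetail with the rest of the tiling; since (on $R$ with those neighbourhoods removed) $e$ has its endpoints on those boundaries, this costs nothing.
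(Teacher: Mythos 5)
Your proof is correct, and it arrives at the same geometric picture as the paper's --- a chain of thin rectangles strung along the curve inside $E_e$, each of which the curve enters and leaves only through its left and right walls --- but by a genuinely different mechanism, so a comparison is worthwhile. The paper leans on the standing hypothesis that edges are differentiable: it sets $s = |\sup \im e_2'|$, fixes a uniform width $w = \frac{\epsilon}{2s}$, and slides rectangles of width $w$ and height on the order of $\epsilon$ along the curve, each centered on the point where the curve enters its left wall, the slope bound forcing exit through the right wall. You instead use progressivity to see that $p_h \circ e$ is a homeomorphism onto an interval, rewrite $\im e$ as the graph of a continuous function $f$, and invoke uniform continuity to partition the base interval into pieces of small oscillation, taking $R_j = I_j \times [m_j - \delta, M_j + \delta]$. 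Your route is more elementary and strictly more robust: it uses no differentiability at all, and it is insensitive to vertical tangents, which progressivity does not forbid and at which the paper's $s$ is infinite (moreover $e_2'$ is a derivative in the curve parameter rather than a slope against the $x$-axis, so the paper's estimate of the vertical deviation by $2ws$ tacitly needs $|e_1'|$ bounded below). The only thing the paper's version buys in exchange is a uniform tile width given by an explicit formula, which is never needed later. Your verifications (i)--(iii) are all sound --- in particular, since $f$ is single-valued, the only part of $\im e$ meeting $R_j$ is the arc over $I_j$, so the two boundary intersection points you exhibit really are all of $\im e \cap \partial R_j$ --- and your closing caveat is the right one: $R_j \subseteq E_e$ requires reading $E_e$ as the set of points within \emph{vertical} distance $\epsilon$ of the edge, which is the reading the paper's own proof uses even though its displayed formula for $E_e$ says something slightly different; with that reading, oscillation and $\delta$ each below $\epsilon/2$ suffice.
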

\begin{proof}
We will deal with the horizontal case; the vertical one follows by symmetry. Let $e(t) = (e_1(t), e_2(t))$, and let $s = |\sup \im e_2'|$ be the largest slope of $e$ as measured against the $y$-axis. If $e$ enters a rectangle through the left vertical wall in the center, and the rectangle has width $w$, then the rectangle must have height less than $2ws$ if it is to force $e$ to leave by the right vertical wall. The vertical distance from the center of the left vertical wall to the boundary of $E_e$ is $\epsilon$ by construction, so $ws$ must be less than $\epsilon$ for the rectangle to fit in $E_e$. Thus, we may choose $w = \frac{\epsilon}{2s}$. 
$$\ingraph{tiling_edge}$$

Then we may freely tile $e$ with rectangles of width $w$ and height $\epsilon$ by placing one at the leftmost point on $e$ so that $e$ enters at the center of the left vertical wall, and then placing the next so that again $e$ enters at the center of the left vertical wall, and so on. When we get to the end, it is no problem to choose a smaller width, so we simply cut the rectangle down so that it fits width-wise.
\end{proof}

Now, having tiled each edge, we may remove the interior of this tiling and consider the rest of the diagram, not yet tiled. The rest of the diagram is a finite collection of connected components containing no points of $\im \varphi$, with rectilinear boundary. We may tile the rest by extending the lines of its boundary until they intersect another boundary line; this divides each region into rectangles. Having done this, we will have tiled the whole diagram. We will denote this tiling of $\varphi$ as $T_\varphi$. \qed

If $\mu : \varphi \to \Da$ is a valuation for $\varphi$ and $T$ is admissible for $\varphi$, then $T$ describes a compatible arrangement of double cells in $\Da$ as follows:
\begin{itemize}
    \item If the restriction $\varphi_i$ has a node $n_i$, then $R_i$ describes the double cell $\mu(n_i)$ with boundary given by 
    \[\begin{tikzcd}
        \mu(\text{in}(l_1)) \arrow{r}{\mu l_1} \arrow{d}[swap]{\mu t_1} & \cdots \arrow{r}{\mu l_n} & \mu(\text{out}(l_n)) \arrow{d}{\mu b_1} \\
        \vdots \arrow{d}[swap]{\mu t_k} & \mu(n_i) & \vdots \arrow{d}{\mu b_q}\\
        \mu(\text{in}(r_1)) \arrow{r}[swap]{\mu r_1} & \cdots \arrow[swap]{r}{\mu r_m} & \mu(\text{out}(r_m)) 
        \end{tikzcd}\]
        
        where $\text{in}_v(n_i) = \{t_i\}$, $\text{out}_v(n_i) = \{b_i\}$, $\text{in}_h(n_i) = \{l_i\}$, and $\text{out}_h(n_i) = \{r_i\}$ and the sequences of horizontal and vertical arrows denote their respective composites in $\Da$.
    \item If the restriction $\varphi_i$ has no node, then it must consist either of a single vertical or horizontal edge. In this case, $R_i$ describes the identity double cell on that edge, with suitable boundary.
\end{itemize}

If the compatible arrangement of double cells described by $T$ admits a composite, we will call this composite $\mu(T)$. By Theorem 1 of \cite{Dawson1993}, $\mu(T)$ is unique, independent of the way in which the arrangement is composed. We would like to define $\mu(\varphi)$ to be $\mu(T)$ for an admissible tiling of $\varphi$, but to do this we must show that $\mu(\varphi)$ is independent of the particular tiling chosen.

\begin{proposition}\label{Lem_TileInvariant}
Let $\varphi$ be a double diagram and $\mu$ a valuation for it. Suppose $S$ and $T$ are two admissible tilings for $\varphi$. Then $\mu(S) = \mu(T)$.
\end{proposition}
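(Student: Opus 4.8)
The plan is to reduce the statement to the uniqueness of composites of compatible arrangements of double cells (Theorem 1 of \cite{Dawson1993}), by exhibiting a single tiling that refines both $S$ and $T$ and checking that passing to a common refinement does not change the value. First I would form the common refinement $U$ whose tiles are the closures of the connected components of $\interior R_i \cap \interior R_j'$ as $R_i$ ranges over $S$ and $R_j'$ over $T$; since both $S$ and $T$ are finite tilings of the rectangle $R$ by rectangles, each such intersection is again a rectangle (or empty), so $U$ is again a finite rectangular tiling of $R$. I would then check that $U$ is admissible for $\varphi$: each tile of $U$ is contained in a tile of $S$ (and of $T$), hence $\varphi$ restricts to a double diagram on it with at most one node and at most one connected component of $\varphi(G)$, and the ``no horizontal edge crosses a vertical wall'' condition is inherited from admissibility of $S$.

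The heart of the argument is then the comparison $\mu(U) = \mu(S)$ (and symmetrically $\mu(U) = \mu(T)$), i.e.\ that refining a tiling does not change the value. Here I would work tile-by-tile: fix $R_i \in S$, and let $U_i \subseteq U$ be the sub-tiling of $R_i$ induced by $U$. If $R_i$ carries a node $n_i$, then interpretability plus admissibility force all but one tile of $U_i$ to be node-free and to carry only identity double cells on single edges (or identity cells on regions), so the compatible arrangement on $U_i$ composes — using the identity laws, which are graphically transparent in this calculus — to the single double cell $\mu(n_i)$ with the correct signature. If $R_i$ is node-free (a single edge or an empty region), the same identity-law bookkeeping shows $U_i$ composes to the corresponding identity double cell. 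Assembling these local composites, the compatible arrangement on $U$ composes to the same arrangement as $S$, so by the uniqueness theorem of \cite{Dawson1993} both composites exist and agree whenever one does; hence $\mu(U) = \mu(S)$, and likewise $\mu(U) = \mu(T)$, giving $\mu(S) = \mu(T)$.

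The main obstacle I anticipate is purely geometric rather than categorical: verifying that the cellwise composites actually assemble into a \emph{globally} compatible arrangement of double cells in the sense of \cite{Dawson1993} — that is, that the refinement $U_i$ of a single tile $R_i$ fits together with the refinements of its neighbours along shared walls without creating a pinwheel-type obstruction. I would handle this by noting that within each $R_i$ the induced tiling $U_i$ is a tiling of a \emph{single} rectangle, which always admits a composable order (a rectangle tiled by rectangles is a neat tile order with no pinwheel), so the local composite $\mu(U_i)$ is well-defined; and that the boundary data of $\mu(U_i)$ matches that of the cell $R_i$ contributes to the arrangement for $S$, since both are computed from the same restriction $\varphi|_{R_i}$ and the same valuation $\mu$. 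Thus replacing each $R_i$'s cell by the arrangement $U_i$ is a legitimate operation on compatible arrangements, and the uniqueness theorem does the rest. A secondary bookkeeping point is to make sure every node-free tile of $U$ is genuinely a single edge or a single region — but this is exactly what admissibility of $S$ and $T$ guarantees, since a tile of $U$ lies inside a tile of each.
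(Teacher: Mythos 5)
Your overall strategy matches the paper's: form a common refinement, show passing to a refinement preserves value, and conclude via Dawson--Par\'e uniqueness. However, there is a genuine gap at the step where you declare $U$ admissible. You claim ``each tile of $U$ is contained in a tile of $S$ (and of $T$), hence $\varphi$ restricts to a double diagram on it with at most one node and at most one connected component'' and again at the end that admissibility of $S$ and $T$ guarantees ``every node-free tile of $U$ is genuinely a single edge or a single region.'' Neither follows: a subset of a tile meeting $\varphi(G)$ in a single connected component can meet it in several components. Concretely, if $S_i$ contains a node $n$ with two incoming vertical edges, and $T_j$ slices off a strip of $S_i$ lying strictly above $n$, then $S_i \cap T_j$ meets $\varphi(G)$ in two disjoint arcs --- so the intersection tile has more than one connected component and fails admissibility. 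The paper flags exactly this failure of $S\#T$ in Lemma \ref{Lem_RefinementAdmissable} and repairs it by re-tiling each offending tile $S_i\cap T_j$ using the construction of Proposition \ref{Lem:DiagramsTileable}, after first checking that $\varphi$ still \emph{restricts to a double diagram} on each $S_i\cap T_j$ (which takes a boundary/progressivity argument of its own). You would need to add both pieces: the verification that each intersection tile is itself a double diagram, and the re-tiling of multi-component tiles to recover admissibility.

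A secondary error worth noting: the parenthetical ``a rectangle tiled by rectangles is a neat tile order with no pinwheel'' is false in general --- the pinwheel is precisely such a tiling, and ruling it out is the whole content of Dawson's theorem. What actually makes the local composite $\mu(U_i)$ work in the refinement lemma is admissibility plus interpretability: at most one tile of $U_i$ carries a non-identity cell, and identity cells can always be absorbed into neighbours, so no composition obstruction survives. The paper's Lemma \ref{Lem_InvariantRefinement} argues exactly along these lines rather than via any blanket claim about rectangular tilings.
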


We will prove Proposition \ref{Lem_TileInvariant} by first showing that if $S$ is a refinement of $T$, then they have the same value, and then by showing that if $S$ and $T$ are admissible, then they have a common refinement which is also admissible.
\begin{definition}\label{Def_Refinement}
A tiling $S$ is said to be a \emph{refinement} of $T$ if for all tiles $T_i$ in $T$ there is a set of tiles $X_i \subseteq S$ of $S$ so that 
$$\bigcup_{x \in X_i} x = T_i.$$
\end{definition}

\begin{lemma}\label{Lem_InvariantRefinement}
If $S$ and $T$ are admissible tilings for $\varphi$, and $S$ is a refinement of $T$, then $\mu(S) = \mu(T)$ for any valuation $\mu$ of $\varphi$.
\end{lemma}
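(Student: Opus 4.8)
The strategy is to localise the comparison to the individual tiles of $T$ and then appeal to Dawson and Par\'e's theory of composition in double categories. I would use two facts from \cite{Dawson1993}: that a composable arrangement of double cells has a unique composite, and (general composition) that a composite may be computed in stages, so that if the arrangement of $S$ is grouped into sub-arrangements each tiling a sub-rectangle, then $\mu(S)$ is defined exactly when every group is composable and the resulting coarser arrangement is composable, the two computations then agreeing. For each tile $T_i$ of $T$, the tiles of $S$ contained in $T_i$ form a tiling $S|_{T_i}$ of the restricted sub-diagram $\varphi|_{T_i}$; since $S$ is admissible, $S|_{T_i}$ is an admissible tiling of $\varphi|_{T_i}$, and since $T$ is admissible, $\varphi|_{T_i}$ contains at most one node. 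It is convenient to reduce first, by induction on the number of tiles of $T$ that $S$ actually subdivides (the intermediate tilings remaining admissible), to the case in which $S$ is obtained from $T$ by subdividing a single tile $T_0$ into an admissible tiling $S|_{T_0}$ of the one-node sub-diagram $\varphi|_{T_0}$.

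The heart of the proof is then the claim that \emph{every admissible tiling of a sub-diagram with at most one node is composable, and its value is the cell that that tile contributes to $T$} --- namely $\mu(n_0)$, whiskered by identity double cells on the stubs of its incident edges, if the sub-diagram has a node $n_0$, and the appropriate identity double cell (on a region, an arrow, or a proarrow) if the sub-diagram is a single edge or empty. Once composability is known the value is forced: every tile of $S|_{T_0}$ except the one containing the node carries an identity double cell, so the interchange and identity laws collapse the composite onto the single nontrivial cell. For composability I would argue that a configuration this simple admits no pinwheel: with at most one node and at most one connected component of the image meeting each tile, and with strings constrained to run straight through tiles, the tiling can always be dismantled by repeatedly composing a pair of tiles that share a full edge --- concretely, by peeling the identity tiles off the node tile, extending its legs and filling in its corners one at a time. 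Granting the claim, $S|_{T_0}$ composes to the $T$-cell attached to $T_0$, the coarser arrangement obtained is exactly the arrangement of $T$, and by general composition $\mu(S)$ is defined iff $\mu(T)$ is, with $\mu(S) = \mu(T)$ by uniqueness.

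The step I expect to be the real obstacle is precisely the composability claim of the second paragraph: showing that the combinatorial conditions packaged into ``admissible tiling'' are rigid enough to exclude a pinwheel in any one-node sub-diagram (equivalently, that such a tiling can always be reduced by full-edge compositions). Everything else is bookkeeping with Dawson and Par\'e's results, and the deferral of existence questions to the notion of neatness in Section 2.4 is consistent with only needing, here, that $\mu(S)$ and $\mu(T)$ are defined under the same hypotheses; it is in verifying this local rigidity that the precise form of Definition \ref{Def_DoubleDiagram} and of admissibility must be used with care.
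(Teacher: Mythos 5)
Your proposal is correct and takes essentially the same route as the paper: localise to each tile $T_i$ of $T$, observe that all tiles of $S$ refining $T_i$ carry identities except the (at most one) containing a node, so the refined arrangement collapses to the cell of $T_i$, and conclude by Dawson and Par\'e's uniqueness of composites. Your explicit induction on subdivided tiles and your flagged worry about composability of the one-node sub-tiling are refinements the paper simply elides (it asserts the collapse directly), so there is no substantive divergence.
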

\begin{proof}
We will show that for each tile $T_i \in T$, the value of the tiling $X_i$ of $T_i$ is the value of $T_i$: $\mu(X_i) = \mu(T_i)$. By Theorem 1 of \cite{Dawson1993}, it will follow that $\mu(S) = \mu(T)$.

There are two cases for $T_i$; either it contains a node or it doesn't. If it doesn't contain a node, then it contains a single edge $e$ and $\mu(T_i) = \id_{\mu(e)}$. For each $x \in X_i$, $\mu(x)$ is an identity $2$-cell either of some region or of $e$. Since there is only a single edge in $T_i$, no nontrivial whiskering can take place, and the composite of identities are the respective identities. Therefore, $\mu(X_i) = \mu(T_i)$. 

If $T_i$ does contain some node $n$, then exactly one $x \in X_i$ must also contain $n$. It follows that $\mu(x) = \mu(n) = \mu(T_i)$. All the other nodes $y \in X_i$ evaluate to some identity, so $\mu(X_i)$ then equals $\mu(T_i)$. 
\end{proof}

Finally, it remains to show that there is a common admissible refinement of any two admissible tilings. Any two tilings of a rectangle have a largest common refinement which is constructed by overlaying the two tilings.
\begin{lemma} \label{Lem_RefinementAdmissable}
Let $S$ and $T$ be admissible tilings of $\varphi$. Then there is a common refinement of $S$ and $T$ which is admissible.
\end{lemma}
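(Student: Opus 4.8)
The plan is to take the \emph{overlay} of $S$ and $T$ as the common refinement. Writing $S_i = [a_i,b_i]\times[c_i,d_i]$ and $T_j = [a'_j,b'_j]\times[c'_j,d'_j]$, each intersection $S_i\cap T_j$ is again a rectangle (or degenerate, in which case we discard it), $\bigcup_{i,j}(S_i\cap T_j) = (\bigcup_i S_i)\cap(\bigcup_j T_j) = R$, and two distinct overlay cells meet only along their boundaries because $S$ and $T$ do; so $U := \{\,S_i\cap T_j \mid S_i\in S,\ T_j\in T,\ S_i\cap T_j \text{ nondegenerate}\,\}$ is a tiling of $R$. Since for each $T_j$ the cells of $U$ lying in $T_j$ are exactly the $S_i\cap T_j$ and their union is $T_j$ --- and symmetrically for each $S_i$ --- the tiling $U$ is a common refinement of $S$ and $T$ in the sense of Definition~\ref{Def_Refinement}.

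The real work is in checking that $U$ is admissible. The key observation, extracted from the definition of a vertical (resp.\ horizontal) diagram together with the admissibility of $S$ and $T$, is that \emph{no vertical edge of $\varphi$ meets a vertical side of any tile of $S$ or $T$, and no horizontal edge of $\varphi$ meets a horizontal side of any such tile}: since $\varphi_v$ restricts to a vertical diagram on each tile $R_k$ of an admissible tiling, every point at which a vertical edge of $\varphi$ crosses $\partial R_k$ is a boundary node of the restricted graph and hence, by condition~(1) of Definition~\ref{Def_VerticalDiagram}, must lie in $\interior\partial_v R_k$, never on a vertical side of $R_k$ --- and the horizontal case is the transpose. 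The same reasoning shows that an admissible tiling has no grid line through a boundary node of $\varphi$ and no tile side running along an edge of $\varphi$; in particular the boundary nodes of $\varphi$ avoid all tile corners. Finally, the last clause of the definition of admissibility says that no tile contains pieces of both a horizontal and a vertical edge of $\varphi$ unless it contains the node where they meet.

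Granting all this, fix an overlay cell $S_i\cap T_j$. It lies in $S_i$, so it contains at most one node, and $\varphi(G)\cap S_i$ is empty, a single arc, or a node together with stubs of its incident edges. Intersecting such a set with $T_j$ leaves it connected: each arc or stub is progressive in one coordinate, hence monotone there and unable to re-enter the slab $T_j$ cuts out in that direction, and by the key observation it cannot leave $T_j$ through the perpendicular sides either, while $T_j$, being convex, cannot separate a node from any stub it partly contains. So $S_i\cap T_j$ carries at most one node and at most one connected component of $\varphi(G)$. If it contains a node $n$, then $n\in\interior S_i\cap\interior T_j\subseteq\interior(S_i\cap T_j)$ --- an interior node of an admissible tiling lies in the interior of its tile, else an adjacent tile would carry it on its boundary against condition~(1) of Definition~\ref{Def_VerticalDiagram} --- and the interpretability rectangle $R_n$ furnished by $\varphi$ being a double diagram may be shrunk to lie inside $S_i\cap T_j$. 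Lastly $\varphi$ restricts to a double diagram on $S_i\cap T_j$: progressivity of $\varphi_v$ and $\varphi_h$ is inherited by restriction; $\partial(S_i\cap T_j)$ lies on the sides of $S_i$ and $T_j$, so by the key observation it meets vertical edges of $\varphi$ only along $\partial_v(S_i\cap T_j)$ and horizontal edges only along $\partial_h(S_i\cap T_j)$, and it contains no boundary node of $\varphi$ at a corner; hence condition~(1) of Definition~\ref{Def_VerticalDiagram} and its transpose hold, and interpretability was just checked.

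I expect the main obstacle to be this last verification --- that the restriction of a double diagram to an overlay cell is again a double diagram --- together with pinning down the ``key observation'' cleanly; once those are in hand, connectedness of $\varphi(G)$ inside each cell, nodes staying in interiors, and the transversality at the new boundary segments are routine point-set topology. The subtle point is that admissibility must be invoked to exclude the various degenerate alignments of tile boundaries with the diagram (sides tangent to or coincident with edges, grid lines through boundary nodes), each of which is ruled out precisely because the restriction of $\varphi$ to every tile of $S$ and of $T$ is already required to be an honest diagram.
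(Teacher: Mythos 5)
Most of your argument tracks the paper's: you form the overlay $S\#T$ with cells $S_i\cap T_j$, you isolate the correct transversality fact (vertical edges of $\varphi$ cross only horizontal tile-sides and horizontal edges only vertical tile-sides, because boundary nodes of a restricted diagram must land in $\interior\partial_v$, resp.\ $\interior\partial_h$, of the tile), and you verify that $\varphi$ restricts to a double diagram on each overlay cell. That last verification is essentially the paper's.

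The gap is your claim that the overlay itself is already admissible, specifically that each cell $S_i\cap T_j$ contains at most one connected component of $\varphi(G)$. This is false in general, and the clause ``$T_j$, being convex, cannot separate a node from any stub it partly contains'' does no work in the problematic case, namely when the node lies \emph{outside} the cell. Concretely: let $n\in S_i$ and $m\in T_j$ be nodes joined by two vertical edges $e_1,e_2$, with $S_i=[0,10]\times[5,10]$, $n=(5,7)$, $T_j=[0,10]\times[2,6]$, $m=(5,3)$. Both tiles are individually admissible ($\varphi(G)\cap S_i$ is connected through $n$, $\varphi(G)\cap T_j$ through $m$), but the overlay cell $[0,10]\times[5,6]$ contains disjoint arcs of $e_1$ and $e_2$ and no node --- two components, and no sensible interpretation as an identity $2$-cell. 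The paper explicitly flags this failure mode and repairs it: since $\varphi$ does restrict to a double diagram on every overlay cell (the part you proved), each offending cell can be re-tiled by the construction of Proposition~\ref{Lem:DiagramsTileable}, and the result is a common refinement of $S$ and $T$ that is admissible. So your proof is salvaged by appending exactly this re-tiling step; without it, the conclusion as you state it does not follow.
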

\begin{proof}
Let $S \# T$ denote the largest common refinement of $S$ and $T$, defined by $(S \# T)_{ij} = S_i \cap T_j$. This is not an admissible tiling because its tiles could contain more than one connected component if a tile from $S$ containing a node intersects one from $T$. However, we will show that $\varphi$ restricts to a double diagram on every tile $S_i \cap T_j$. Therefore, we can take each tile $S_i \cap T_j$ which contains more than one connected component and tile in the manner described in Lemma \ref{Lem:DiagramsTileable}. This will give an admissible common refinement of $S$ and $T$.

We need to show that the restriction $\varphi_i$ of $\varphi$ to any $R_{ij} \in S \# T$ is a double diagram. Note that $\varphi_i$ can contain at most one node and at most one connected component of $\varphi$, since $R_{ij} \subseteq S_i$ an $S_i$ is admissible. It remains to show that (1) restricted to $vG$, $\varphi_i$ is a vertical diagram, (2) restricted to $hG$, $\varphi_i$ is a horizontal diagram, and (3) that $\varphi_i$ is interpretable. 

Let $R_{ij} \in S \# T$. By construction, $R_{ij} = S_i \cap T_j$ for $S_i \in S$ and $T_j \in T$. The sides of $R_{ij}$ are parts of the sides of $S_i$ and $T_j$, and in particular the vertical boundary of $R_{ij}$ must be part of the vertical boundary of $S_i$ and $T_j$, and similarly for the horizontal boundary. We are ready to show that $\varphi_i$ satisfies the three conditions of Definition \ref{Def_DoubleDiagram}.

\begin{enumerate}
    \item Since $\varphi$ restricts to a vertical diagram on $S_i$ and is therefore vertically progressive on $S_i$, and $R_{ij} \subseteq S_i$, $\varphi$ will remain vertically progressive. Since the sides of $R_{ij}$ are parts of the sides of $S_i$ and $T_j$, the boundary nodes of the domain of $\varphi_i$ must lie in one of the boundaries of $S_i$ and $T_j$. Since $S$ and $T$ are admissible, this means that the boundary nodes must lie only on the vertical boundaries of $S_i$ or $T_j$, and so must lie on the vertical boundary of $R_{ij}$. Furthermore, if $\varphi_i(x) \in \partial R_{ij}$, then it is also in $\partial S_i$ or $\partial T_j$, and is therefore a boundary node of $S_i$ or $T_j$ and so a boundary node of $R_{ij}$. 
    \item This condition may be shown to hold by the same argument as above, replacing ``vertical'' for ``horizontal''.
    \item This condition concerns interior nodes. If $R_{ij}$ does not contain an interior node, then it is trivially satisfied. If $R_{ij}$ contains $n$, then we will let $R_n = R_{ij}$. Then, $R_n = R_{ij}$ must contain only one node; otherwise, since $R_{ij} \subseteq S_i$, $S_i$ would contain more than one, contradicting the admissibility of $S$. Since each edge $e$ which intersects $R_{ij}$ intersects $S_i$, it can intersect $R_{ij}$ only once. Since $S_i$ contains only one connected component, $e$ must be incident to $n$. Finally, if $e$ is vertical then it must intersect $S_i$ and $T_j$ on a vertical side, and therefore $R_{ij}$ on a vertical side (which might be part of either $S_i$ or $T_j$'s vertical sides). Similarly, if $e$ is horizontal, it must intersect $S_i$ and $T_j$ on a horizontal side, and so $R_{ij}$ on a horizontal side. 
\end{enumerate}
\end{proof}

Using these two lemmas, we can quickly prove Proposition \ref{Lem_TileInvariant}

\begin{proof}[Proof of Proposition \ref{Lem_TileInvariant}]
By Lemma \ref{Lem_RefinementAdmissable}, there is a common refinement $X$ of $S$ and $T$ is admissible. By applying Lemma \ref{Lem_InvariantRefinement} twice, we see that $\mu(S) = \mu(X) = \mu(T)$, which was to be shown.
\end{proof}

Therefore, we can safely define the value $\mu(\varphi)$ of a double diagram to be $\mu(T)$ for some admissible tiling $T$ of $\varphi$, knowing that $\mu(\varphi)$ is invariant under our choice of such a tiling. We are now ready to prove that the value of a double diagram is invariant under deformation, given the existence of admissible tilings.

\begin{lemma} \label{Lem_ValueInvariant}
Let $h : G \times [0,1] \to R$ be a deformation of double diagrams and $\mu$ a valuation of $h(-, 0)$. Suppose that the value $\mu(h(-,0))$ exists. If for all $t \in [0, 1]$, there is an admissible and composable tiling $T_t$ for $h(-,t)$, then $\mu(h(-,t))$ exists and equals $\mu(h(-,0))$.
\end{lemma}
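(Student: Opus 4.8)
The plan is to reduce the assertion to the connectedness of $[0,1]$: I will show that the function $t \mapsto \mu(h(-,t))$ is everywhere well-defined and locally constant, so that it is constant and in particular equal to its value at $t=0$. First I would observe well-definedness: for each $t$ the hypothesis supplies an admissible composable tiling $T_t$ of $h(-,t)$, so $\mu(T_t)$ exists, and by Proposition \ref{Lem_TileInvariant} its value does not depend on the chosen admissible tiling, so we may set $\mu(h(-,t)) := \mu(T_t)$. All of these $2$-cells lie in the single set of $2$-cells of $\Da$ whose boundary is the composite prescribed by $\mu$ on $\partial G$, which is unchanged along the deformation; hence the equality $\mu(h(-,t)) = \mu(h(-,0))$ is a meaningful statement.

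To establish local constancy at a point $t_0$, I would show that the admissible composable tiling $T_{t_0}$ of $h(-,t_0)$ remains admissible for $h(-,t)$ for all $t$ with $|t-t_0|$ sufficiently small, and that, viewed as a tiling of $h(-,t)$, it describes exactly the same compatible arrangement of double cells in $\Da$ under $\mu$. Granting this, $T_{t_0}$ is composable for $h(-,t)$ with value $\mu(T_{t_0}) = \mu(h(-,t_0))$; since $T_t$ is another admissible tiling of $h(-,t)$ that has a value, Proposition \ref{Lem_TileInvariant} gives $\mu(h(-,t)) = \mu(T_t) = \mu(T_{t_0}) = \mu(h(-,t_0))$ for all such $t$, which is exactly local constancy.

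It remains to prove that $T_{t_0}$ stays admissible with the same combinatorics for nearby $t$, and this is the technical heart. Since $h$ is continuous on the compact space $G \times [0,1]$ it is uniformly continuous, so $\sup_{x \in G} d(h(x,t), h(x,t_0)) \to 0$ as $t \to t_0$. Now each clause of ``$T_{t_0}$ is admissible for $h(-,t_0)$'' holds with a strictly positive margin: every interior node lies in the interior of its tile, at positive distance from the tile boundary; every point at which the diagram meets the $1$-skeleton of $T_{t_0}$ lies in the relative interior of a tile-side of the orientation opposite to the edge (by admissibility), is the unique intersection of that edge with that side (since double diagrams are progressive), and is at positive distance from every corner of the tiling; and the rest of the diagram stays at positive distance from the $1$-skeleton. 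Choosing $\delta > 0$ so that the uniform displacement bound for $|t-t_0| < \delta$ is smaller than the minimum of these finitely many margins, none of the conditions can fail for $h(-,t)$: no node leaves its tile, no crossing slides onto a corner or onto a tile-side of the wrong orientation, and no crossing is created or destroyed — the last point using again that $h(-,t)$ is progressive and that the incidence data of a diagram is constant along a deformation, as observed after the definition of deformation. Consequently the arrangement of double cells and its signatures are literally unchanged, so composability and value carry over, completing the claim.

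The main obstacle is exactly this openness step: making precise that a small enough deformation cannot merge or separate the connected components of the diagram met by a single tile, cannot push a crossing past a corner, and cannot alter which node sits in which tile or which side an edge crosses — i.e.\ extracting the positive margins above and checking that the uniform displacement bound beats them. Everything else is bookkeeping on top of Proposition \ref{Lem_TileInvariant}: the well-definedness of $\mu(h(-,t))$, the reduction to a locally constant function, and the connectedness conclusion on $[0,1]$.
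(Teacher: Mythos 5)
Your proof is correct and follows essentially the same route as the paper's: the paper asserts that any sufficiently small perturbation preserves the admissibility of a tiling, uses compactness of $[0,1]$ to extract a finite chain $t_0 < \cdots < t_n$ with each $T_{t_i}$ admissible and composable for the next stage, and chains equalities via Proposition \ref{Lem_TileInvariant}, which is the same mechanism as your local-constancy-plus-connectedness argument. If anything, you give more justification than the paper does for the key openness step (the positive-margin argument), which the paper leaves as a one-line remark.
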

\begin{proof}
Note any suitably small perturbation of a diagram preserves the admissibility of any tiling of it. Therefore, by the compactness of the interval, we may choose finitely many $0 = t_0$, $\ldots$, $t_n = 1$ so that the assumed tiling $T_i$ of $h(-, t_i)$ is also admissible for $t_i+1$, and so that all $t \in [0, 1]$ are admissibly tiled by some $T_i$. We can then transport the valuation $\mu$ from $h(-, t_0)$ to $h(-, t_1)$, and since $T_0$ is admissible and composable for both, $\mu(h(-, t_0)) = \mu(T_0) = \mu(h(-, t_1))$. Continuing this way, we see that the value is invariant at all stages of the deformation.
\end{proof}

It remains to show that there is an admissible and composable tiling for any diagram that describes a composable arrangement of double cells.

\subsection{Neat Diagrams and Composable Tilings}

Not all tilings of the plane by rectangles are composable by horizontal and vertical mergings of rectangles; therefore, not all arrangements of 2-cells in a double category are composable with the two sorts of compositions. In the above section, we naively tiled a double diagram. In this section, we'll see that under certain conditions on the diagram, this tiling may be modified in order to be composable.

In his paper \cite{Dawson}, Dawson characterizes those diagrams in the usual notation for double categories which admit composites by repeated application of the two binary compositions available in a double category. To do this, he uses the tile-order machinery developed by him and Par{\'e} in \cite{Dawson1993}. A tile-order is an order theoretic abstraction of a tiling of a rectangle; this abstracts the usual notation since such tilings are given by any array of 2-cells in the usual notation. 

Let's recall the language of tile-orders. 
\begin{definition}\label{Def-Tileorder}
A tile order $A$ is a set admitting two orders, ``below'', and ``beside'', which can be realized as the set of tiles in a tiling of a rectangle given the two orders as transitive closures of the relations ``$T$ is below $T'$ when the top edge of $T$ intersects the bottom edge of $T'$'' and ``$T$ is beside $T'$ when the right edge of $T$ intersects the left edge of $T'$'', respectively.
\end{definition}

Given a tile order $A$, we can draw its Hasse diagram in any tiling which realizes it by connecting the centers of tiles to each other with the right sort of line (if the line passes through a vertical edge, it is labeled horizontal, and vice versa). Note that, therefore, any composite in a double category may be expressed by a double diagram. First, express the composite in the usual notation; this gives a tiling of a rectangle. Draw a node in each tile, and draw a line through each boundary line of the tile of the correct orientation (i.e. a vertical edge through a line on the top boundary of a tile, etc). This gives a double diagram together with an admissible tiling of it; therefore, the composite of the double diagram exists and equals the composite we wished to express. 

In \cite{Dawson}, Dawson introduces the notion of a neat tile order, and shows that tilings are composable if and only if they induce neat tile orders. 
\begin{definition}
A tile order is \emph{neat} if for any $\epsilon > 0$ it can be realized as the tile order of a tiling whose Hasse diagram has each line marked vertical being within $\epsilon$ of being actually vertical, and each line marked horizontal being within $\epsilon$ of being actually horizontal.
\end{definition}

\begin{theorem}{(Dawson, \cite{Dawson})}
A tiling is composable if and only if its induced tile order is neat.
\end{theorem}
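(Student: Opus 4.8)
The plan is to establish the two implications of the biconditional separately, treating ``composable implies neat'' by a direct inductive construction of well-behaved realizations, and ``neat implies composable'' by the contrapositive, using the fact recalled in the introduction that the pinwheel is the only obstruction to composition.

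For the forward direction, suppose the tiling $T$ of a rectangle $R$ is composable. Then $R$ is cut by a single horizontal or vertical line into two sub-rectangles, each inheriting a composable tiling, and so on recursively; this recursion is the \emph{composition tree} of $T$. I would prove, by induction on this tree, a statement slightly stronger than neatness: for every $\epsilon > 0$, every prescribed shape of bounding box, and every prescribed (combinatorially consistent) choice of subdivision points along one designated edge of the box, $T$ admits an $\epsilon$-neat realization in a box of that shape with those subdivision points. The base case of a single tile is immediate. At a cut node, say a vertical cut, I would first realize the left sub-tiling in its box, read off the positions at which its right edge is subdivided, and then invoke the inductive hypothesis on the right sub-tiling with those positions prescribed on its left edge, so that the two realizations glue along the cut. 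The new Hasse edges created by the cut run between tiles abutting it; since the cut is vertical they pass through vertical tile-boundaries and are therefore marked horizontal, and I would make them as close to horizontal as desired by arranging each sub-realization to be very short (small vertical extent) --- an operation that can only help, not hurt, the edges already marked vertical, and whose effect on the already-controlled horizontal-marked edges inside a sub-block is absorbed by recursively demanding that those blocks be short relative to their width. The care needed here is that the box shape a block should be given depends on its position in the tree, not just on whether its root operation is a horizontal or a vertical composite, so the induction must pass box shapes down explicitly; getting this statement exactly right --- strong enough to make the gluing step go through without circularity --- is the main bit of bookkeeping.

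For the converse I would argue the contrapositive. If $T$ is not composable, then, by Dawson's companion result, $T$ contains a pinwheel: five regions, a central region $C$ adjacent to four outer regions $N$, $E$, $S$, $W$ (each a union of tiles), with the outer regions cyclically adjacent and the adjacencies around the cycle alternating between ``beside'' and ``below''. I would show that such a configuration cannot sit inside any $\epsilon$-neat realization once $\epsilon$ is small. Choosing actual tiles that witness the various adjacencies, one obtains four Hasse edges marked vertical (the ``$C$ below $N$'', ``$S$ below $C$'', ``$E$ below $N$'', ``$S$ below $W$'' edges) and four marked horizontal. In the $\epsilon \to 0$ limit the vertical-marked edges become exactly plumb and the horizontal-marked ones exactly level; chaining the resulting equalities of $x$-coordinates of centres around the pinwheel forces the centre of the witnessing tile in $W$ and that in $N$ to share an $x$-coordinate, contradicting the strict inequality imposed by the horizontal-marked edge ``$W$ beside $N$''. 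A compactness argument then shows that this failure already occurs for all $\epsilon$ below a threshold depending only on the combinatorial pinwheel data, so $T$ is not neat.

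I expect the converse to be the harder half, and within it the delicate point is not the coordinate chase for the bare five-tile pinwheel --- that is the short computation just sketched --- but the fact that in a \emph{generalized} pinwheel the blades $N$, $E$, $S$, $W$ are unions of many tiles, so the tiles witnessing the various adjacencies of a single blade need not coincide; closing the chain of coordinate relations then requires first relating those tiles through the (composable, after reducing to a minimal counterexample) internal tiling of each blade, and checking the uniformity of the resulting threshold on $\epsilon$. By contrast, the forward direction presents no conceptual obstacle, only the organizational one of formulating the inductive hypothesis with enough parameters.
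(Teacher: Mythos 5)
First, a point of comparison: the paper does not prove this statement at all --- it is quoted as a black box from Dawson's paper \cite{Dawson} --- so there is no in-house argument to measure yours against, and your proposal has to stand on its own. Its overall shape (induction on the composition tree for one direction, reduction to the pinwheel for the other) is the right one, but the central step of your converse does not work as stated. Having reduced to showing that a generalized pinwheel admits no $\epsilon$-neat realization for small $\epsilon$, you chain the near-equalities of centre coordinates coming from the Hasse edges around the pinwheel and claim a contradiction with ``the strict inequality imposed by the horizontal-marked edge.'' But a strict inequality between centre coordinates carries no quantitative content: as $\epsilon \to 0$ the realizations are free to let that inequality close up, so the chain of approximate equalities contradicts nothing by itself. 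The actual obstruction is that closing the chain forces the central tile's width and height to tend to $0$, and then, propagating outward through the blade adjacencies, forces every blade to degenerate as well --- impossible, since the tiles must continue to cover a rectangle of fixed normalized area. Your compactness step can be repaired to deliver exactly this (the limit point is a degenerate tiling in which tiles have collapsed, and the contradiction is with the covering constraint, not with the order relations), but as written the ``short computation'' you lean on is not a proof. Note also that this direction consumes the forbidden-suborder theorem (not composable implies contains a generalized pinwheel), which is the hardest result of Dawson's paper; citing it is defensible, since the present paper also cites it as Proposition 4.2, but it means you are proving an equivalence of two characterizations rather than an independent theorem.

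In the forward direction there is a smaller but genuine gap of the same flavour: making each sub-realization ``very short'' does not by itself make the edges crossing a vertical cut nearly horizontal, because such an edge joins the centres of two tiles abutting the cut, and if those tiles are themselves thin then the edge's horizontal displacement is as small as its vertical one and its angle is uncontrolled. The inductive hypothesis must therefore bound from below the widths of the tiles meeting the left and right walls of each block (and dually for heights), in addition to prescribing box shapes and subdivision points. This is presumably what your ``bookkeeping'' remark gestures at, but it is a load-bearing parameter of the induction, not an organizational detail, and the gluing step fails without it.
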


This is because the only obstruction to the composition of a diagram is a pinwheel, drawn as follows as both a tiling and a double diagram. No two tiles in a pinwheel may be merged, so it cannot be composed.
$$(\ast) \quad \ingraph{full_pinwheel}$$

Any tiling which runs into a pinwheel at some process of its composing cannot be fully composed, at least by that method of composition. Dawson shows further that the pinwheel (and its reflection) are the only such obstructions to composition.

\begin{proposition}{(Prop 4.2 in Dawson, \cite{Dawson})}
If a tiling may not be composed by some process of composition, then that process induces a pinwheel.
\end{proposition}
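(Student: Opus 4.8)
We first recast the statement. A \emph{process of composition} applied to a tiling is a finite sequence of binary \emph{merges}, each of which replaces two tiles whose union is again a rectangle (equivalently, two tiles sharing a full common edge) by that rectangle; each merge is a genuine composition of double cells, and the tiling present at every stage is still a compatible arrangement in the sense of \cite{Dawson1993}. To say the process \emph{may not be composed} is to say that it halts at a tiling $T'$ with more than one tile in which no mergeable pair remains. Since, as noted above, a pinwheel admits no merges, it suffices to prove the following purely combinatorial fact: \emph{any tiling of a rectangle by at least two rectangles in which no two tiles may be merged contains a pinwheel} --- that is, five of its tiles whose union is a sub-rectangle tiled in the pattern $(\ast)$ or its mirror image. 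This is the sense in which the halted process ``induces a pinwheel''.

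To locate such a pinwheel I would argue by \emph{following overhangs}. Call a tile $B$ an \emph{overhang} of a tile $A$ at a corner $p$ of $A$ if $B$ meets an edge of $A$ incident to $p$ and $B$'s edge along that side strictly extends past $p$. Start from any tile $A_0$: since $T'$ has more than one tile, $A_0$ has an edge lying in $\interior R$, say its top edge. Because $A_0$ cannot be merged upward, either one tile occupies its whole top edge and overhangs one of its two upper corners, or several tiles meet the top edge and there is a tiling vertex strictly interior to it; in the latter case the tile lying just past the rightmost such vertex overhangs $A_0$ at its top-right corner. Either way we obtain an overhang $A_1$ of $A_0$ at a corner $p_0$, with an orientation fixed by a choice of chirality (this choice is what eventually produces $(\ast)$ as opposed to its reflection). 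We iterate: ``no merge available'' forces a tile $A_2$ overhanging $A_1$ along the next edge clockwise, rotated a further quarter turn, and so on, producing a chain $A_0, A_1, A_2, \dots$ with pivot corners $p_0, p_1, p_2, \dots$. Since $T'$ is finite, the sequence of pairs (tile, side entered along) must repeat, so the chain closes into a cycle; since the side advances by a quarter turn at each step, the cycle length is a multiple of $4$. Choosing a cycle whose bounding rectangle $R^{\ast}$ has least area, one argues the length is exactly $4$. The four tiles of a length-$4$ cycle are then the four vanes of a pinwheel inside $R^{\ast}$: the part of $R^{\ast}$ they leave uncovered is a rectangle, it contains no mergeable pair (a merge there would be a merge in $T'$), and by minimality of area it must be a single tile --- the centre. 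Hence $R^{\ast}$ is a genuine five-tile pinwheel.

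Two small verifications feed the minimality step: a tiling of a rectangle by two, three, or four rectangles always contains a mergeable pair, and a tiling by five rectangles with no mergeable pair is exactly a pinwheel. Both are finite case checks.

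The main obstacle is the assertion that an area-minimal closed overhang chain has length exactly $4$; that is, that before closing up the chain cannot run out to $\partial R$, and that when it closes it does not do so through a longer cycle. Making this precise is exactly the bookkeeping Dawson carries out in \cite{Dawson} using the \emph{below} and \emph{beside} tile orders of \cite{Dawson1993}: a halted merging process is encoded as a cyclic configuration in the combined order, and one must inspect the finitely many local pictures at a pivot --- distinguishing $T$-junctions from $+$-junctions, and the two chiralities --- to see that any such cyclic configuration reduces to the pinwheel tile order. We refer to \cite{Dawson} for the details.
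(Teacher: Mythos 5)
The paper does not prove this proposition; it is Dawson's Proposition 4.2, quoted from \cite{Dawson} and used as a black box. There is therefore no in-paper proof to compare against --- your sketch is a reconstruction of Dawson's argument, not an alternative to anything the paper does.

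Taken on its own terms, the sketch has the right shape: follow overhangs, close them into a cycle, and argue a minimal cycle has length four. But the two assertions you flag at the end as ``what Dawson carries out'' are precisely where the proof lives, and a few other steps as written would need repair. The passage from ``no merge upward'' to ``an overhang $A_1$ of $A_0$'' of a fixed chirality is not automatic: at a T-junction along the top edge of $A_0$ the overhanging tile is forced, but at an interior edge covered by a single tile you get an overhang at one of two corners and must argue you can consistently choose the one that keeps the chirality, which is a genuine case analysis, not a ``choice.'' The claim that the four vanes of a length-$4$ cycle ``leave uncovered'' exactly a rectangle needs a proof; nothing in the overhang condition forces the four inner edges to align into a single rectangle rather than a staircase. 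And the minimality step is an induction in disguise: you need that a multi-tile, merge-free filling of the hole contains a strictly smaller cycle, which requires re-running the whole overhang construction inside the hole and showing the resulting bounding rectangle is strictly smaller --- not obvious if it could coincide with the hole. Finally, you never actually rule out the chain escaping to $\partial R$, only note that it must be ruled out. None of this is wrong in spirit, but the ``one argues'' sentence at the crux is the entire content of Dawson's proposition, so the proposal should be read as an outline that correctly identifies what needs to be proved rather than as a proof.
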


 Since the pinwheel is the only obstruction to the composability of a tile-order, we will ask that $\varphi$ lacks pinwheels. More precisely, we will ask that $\varphi$ not have a \emph{full pinwheel} as an induced subgraph. The full pinwheel is drawn as $(\ast)$, and is defined as a pinwheel of tiles for which there is a horizontal edge through each vertical interior wall, and a vertical edge through each horizontal interior wall.

There is also a version of the pinwheel with opposite ``orientation''. All results about pinwheels follow for both by symmetry. 

\begin{lemma} \label{Lem_NoPinwheelsComposable}
If $\varphi$ does not have any induced full pinwheels, then it admits a composable tiling.
\end{lemma}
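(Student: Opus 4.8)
The strategy is to start with the naive admissible tiling $T_\varphi$ produced by Proposition \ref{Lem:DiagramsTileable} and massage it into a composable one, using Dawson's theorem that a tiling is composable exactly when its induced tile order is neat. So the target is to produce an \emph{admissible} tiling of $\varphi$ whose induced tile order is neat, equivalently one that contains no pinwheel as a sub-tile-order. By hypothesis $\varphi$ contains no induced full pinwheel, and the geometric content of the argument is that any pinwheel appearing in a tiling of $\varphi$ can be ``charged'' to a configuration of edges of $\varphi$ that would have to form a full pinwheel. First I would set up the correspondence between pinwheels in an admissible tiling $T$ and the local picture of $\varphi$ inside the five tiles of such a pinwheel: by admissibility each tile carries at most one node and at most one connected component, and the interior walls of the pinwheel are crossed (by condition (c) in the definition of admissibility combined with interpretability) by edges of $\varphi$ of the complementary orientation. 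The key observation is that the five regions of a pinwheel in $T$, together with the edges of $\varphi$ threading their shared walls, assemble into an induced full pinwheel of $\varphi$ unless some of those walls are \emph{not} crossed by an edge at all.

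\textbf{Main steps.} (1) Recall/restate that it suffices, by Dawson's theorem and Proposition \ref{Lem:DiagramsTileable} together with Lemma \ref{Lem_InvariantRefinement}-style reasoning, to exhibit one admissible tiling of $\varphi$ with neat induced tile order; and that non-neatness of a tile order is witnessed by a pinwheel sub-configuration (using Proposition 4.2 of \cite{Dawson}). (2) Take the naive tiling $T_\varphi$ and analyze any pinwheel $P$ inside it. Each of the five tiles of $P$ either contains a node of $\varphi$, or contains a single edge, or is empty (lies in a region); distinguish these cases. (3) Show that an empty tile in $P$ can be eliminated: since it carries only an identity $2$-cell on a region, the wall of $P$ it is responsible for is not forced, and the tile can be merged with an orientation-compatible neighbor or the pinwheel can be locally re-tiled (extending boundary lines, as in the final paragraph of the proof of Proposition \ref{Lem:DiagramsTileable}) to destroy $P$ without creating a new pinwheel. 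The same goes for a tile containing only a single edge whose direction is compatible with merging across one of $P$'s walls. (4) This leaves the case where every tile of $P$ contributes a genuine node or an edge forced to cross the relevant walls; in that case the five nodes/edges and the edges threading the walls of $P$ form precisely an induced full pinwheel of $\varphi$ in the sense defined just above the lemma, contradicting the hypothesis. (5) Conclude that after finitely many such local modifications (the diagram is finite, so there are finitely many pinwheels to clear, and each modification strictly decreases their number while preserving admissibility) we obtain an admissible tiling with no pinwheel, hence neat, hence composable.

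\textbf{Main obstacle.} The delicate point is step (3)--(4): making precise the claim that clearing one pinwheel does not create another, and that a pinwheel all of whose tiles are ``forced'' really does exhibit an induced \emph{full} pinwheel of $\varphi$ rather than merely a pinwheel-shaped arrangement of tiles. One has to track carefully which interior walls of the tile-level pinwheel are crossed by edges of $\varphi$ of the correct orientation — this is exactly where the interpretability condition (Definition \ref{Def_DoubleDiagram}(3c)) and the admissibility condition (no horizontal edge meets a vertical wall of a tile, and vice versa) do the work — and to check that a wall crossed by \emph{no} edge always admits a merging. I would handle this by a short case analysis on the orientations of the four interior walls of a pinwheel and the types of its five tiles, observing that a pinwheel forces its walls to alternate orientation around the center, so that if all five tiles are forced the threading edges close up into the full pinwheel graph; and if some wall is unthreaded, the two tiles meeting along it can be merged in the orientation that wall permits, reducing to a smaller tiling on which one induces the argument (formally, an induction on the number of tiles, or on the number of pinwheels).
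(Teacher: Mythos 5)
Your overall strategy — start from the naive tiling $T_\varphi$, observe that a tile-level pinwheel whose interior walls are all threaded by edges of $\varphi$ of the complementary orientation would be an induced full pinwheel, and use the hypothesis to find an unthreaded wall that permits a local modification — is the same as the paper's. But your proposed local modification has a genuine problem: you repeatedly say that the two tiles meeting along an unthreaded wall "can be merged." In a pinwheel no two tiles can be merged into a rectangle, regardless of what $\varphi$ does; adjacent tiles of a pinwheel share only part of a wall, and this impossibility of merging is precisely what makes the pinwheel the obstruction to composition in the first place (the paper states this explicitly when introducing the pinwheel). The correct move, which you mention only parenthetically as an alternative ("extending boundary lines"), is to \emph{split} one of the four outer rectangles along the line extending its shared side with the central square; once any outer rectangle is so bisected, the resulting six-tile configuration composes.

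The second gap is that you do not engage with the actual difficulty of carrying out that split, which is where the paper's proof does its work. The unthreaded wall tells you which bisecting line to extend, but the tile being split may contain a node of $\varphi$ sitting on the wrong side of that line, or an edge incident to that node crossing the line, so the extended line would not yield an admissible tiling. The paper handles this by deforming $\varphi$ within the tiling to move the node across the bisecting line, and, when a further horizontal edge obstructs that deformation, by cutting a thin rectangle off the side of the pinwheel and composing the pieces in stages. Your "short case analysis on the orientations of the four interior walls" does not address this, and your closing claim that "a wall crossed by no edge always admits a merging" is false as stated. You correctly identify steps (3)--(4) as the delicate point, but the resolution you sketch would not go through without replacing merging by splitting and adding the deformation argument.
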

\begin{proof}
We will show that any pinwheels which arise in the construction of the naive tiling $T_\varphi$ are inessential in the sense that they can be replaced without affecting the value of $T_\varphi$. We note that if any of the rectangles (excluding the central square) is divided in half along the line which extends its intersection with the interior square, then the pinwheel may be composed. For that reason, it suffices to show that one of the outer rectangles of the pinwheel may be divided.

If $\varphi$ does not have any induced full pinwheels, then if $T_\varphi$ contains a pinwheel, it must be missing one of the connecting edges (otherwise, it would be full). We may assume, furthermore, that if the lines comprising the rectangles are extended, they do not intersect any node. If this occurs, we may perturb the diagram slightly so as to avoid it. We then consider two cases: first where the missing edge is an outer edge going between two outer rectangles, and second when it is an inner edge connecting an outer rectangle with the central square. Each particular instance of these two cases is the same as any other up to symmetry.

We suppose that we are in some stage of composition of the valuation of the tiling $T_\varphi$. In the first case, suppose the missing edge would connect the upper two outer rectangles. The node in the upper right rectangle is either above or below the bisecting line. In the case that it is below, we may simply extend the bisecting line.
$$\ingraph{pin1_a} \quad \longrightarrow \quad \ingraph{pin1_b}$$

If the node is above the bisecting line, then we may deform $\varphi$ so that it is below without leaving the tiling. There is a possible obstruction to this action, however, in the case that there is a further horizontal edge which leaves the top right node to the right, and which exits the top right rectangle above the bisecting line.
$$\ingraph{pinwheelObstr1}$$

In this case, we cut a suitably small rectangle from the right side of the pinwheel (small enough that it doesn't intersect any vertical edges which happen to be around), and then deform the diagram so that the horizontal edge leaves below the bisecting line.
$$\ingraph{pinwheelObstr1a} \quad \longrightarrow \quad \ingraph{pinwheelObstr1b} \quad \longrightarrow \quad \ingraph{pinwheelObstr1c} $$

We may then compose the part of the pinwheel which is left, compose the top and bottom parts of the cut, and then compose the resulting composites. Therefore, even with such an obstruction, we may still compose.

In the second case, suppose that the missing edge would connect the upper left rectangle to the center square. Either the node in the upper left rectangle is to the left or to the right of the bisecting line. If it is to the left, we may simply extend the bisecting line, as shown here:
$$\ingraph{pin2_a} \quad \longrightarrow \quad \ingraph{pin2_b}$$
If the node is to the right of the bisecting line, then we may deform $\varphi$ so that it lies to the left without leaving the tiling. If an obstruction occurs, we proceed as we did in the other case.

It is clear that in all the above cases, the value of the diagram has not changed. Since the only obstruction to the composability of a tiling is the pinwheel (\cite{Dawson}), and we have shown that we can cut each pinwheel so that it can be composed, the tiling is composable.
\end{proof}

There is a more natural characterization of full-pinwheel-free double diagrams which we define by analogy to the notion with the same name in \cite{Dawson}.
\begin{definition}
A double diagram $\varphi = (\varphi_1, \varphi_2) : G \to R$ is \emph{neat} if for every $\epsilon > 0$, there exists a double diagram $\varphi_\epsilon : G \to R$ and a deformation $h_\epsilon$ of $\varphi$ into $\varphi_\epsilon$, such that
\begin{enumerate}
    \item On each vertical edge $e \in vG_1$, $|\varphi_1'(x)| < \epsilon$ for all $x \in e$, and
    \item on each horizontal edge $e \in hG_1$, $|\varphi_2'(x)| < \epsilon$ for all $x \in e$.
\end{enumerate}
\end{definition}

Intuitively, a diagram is neat if it may be deformed so that the vertical edges are within $\epsilon$ of actually being vertical, and similarly for the horizontal edges, for any desired $\epsilon >0$. For the most part, neatness may be noted immediately by looking at a diagram. In general, neatness will simply follow from the ``good practice'' of writing vertical arrows as vertical as possible, and horizontal arrows as horizontal as possible. Every neat diagram is composable.

\begin{lemma}\label{Lem_NeatIffComposable}
If double diagram is neat, then it does not induce any full pinwheels and is therefore composable.
\end{lemma}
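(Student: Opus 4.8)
The plan is to argue by contraposition: I will show that if a neat double diagram $\varphi$ did contain an induced full pinwheel, then $\varphi$ could not be deformed to make all its edges nearly axis-aligned, contradicting neatness. Once the pinwheel-freeness is established, composability follows immediately from Lemma~\ref{Lem_NoPinwheelsComposable}.

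First I would recall the structure of the full pinwheel $(\ast)$ as an induced subgraph of $\varphi$: it consists of five regions (a central square and four outer rectangles arranged cyclically), with a vertical edge through each horizontal interior wall and a horizontal edge through each vertical interior wall. The key geometric observation is that the four ``connecting'' edges of a pinwheel wind around the central square: each outer rectangle is entered by one connecting edge on one of its sides and exited by the next connecting edge on an adjacent (perpendicular) side. I would make precise the sense in which this cyclic arrangement is a deformation invariant — since a deformation $h_\epsilon$ is a continuous family of \emph{embeddings}, it cannot change the cyclic order in which the edges emanate from a node (this ordering was defined via regular levels in Section~2.1 and shown to be deformation-stable), nor can it change which region lies on which side of an edge (the $\inE$/$\outE$ data is likewise invariant). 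Hence any deformation $\varphi_\epsilon$ of $\varphi$ still contains the same combinatorial pinwheel, with the same winding.

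The heart of the argument is then a winding-number / total-turning estimate. Consider the closed loop traced by following the four connecting edges of the pinwheel around the central square, together with short arcs through the four corner nodes. If every vertical edge has $|\varphi_1'| < \epsilon$ and every horizontal edge has $|\varphi_2'| < \epsilon$ on $\varphi_\epsilon$, then each connecting edge is nearly straight (nearly axis-parallel), so the total turning of this loop is concentrated at the four corner nodes and is within $O(\epsilon)$ of a multiple of $\pi/2$ at each — but the pinwheel winding forces the loop to make a full turn of $\pm 2\pi$, which a union of four nearly-axis-parallel segments meeting at near-right angles cannot do while also being embedded and enclosing the central square. More carefully: an edge that enters a node horizontally and leaves it vertically, repeated four times with consistent orientation around the square, forces one of the four edges to double back, i.e. to fail progressivity in its own direction, once $\epsilon$ is small enough. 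I would choose $\epsilon$ smaller than the minimal ``slack'' available in the geometry of the pinwheel (formally, small enough that a curve with slope bounded by $\epsilon$ starting on one wall of the relevant rectangle cannot reach the perpendicular wall it is required to exit by) and derive the contradiction with condition~1 or~2 in the definition of progressivity (Definition~\ref{Def_VerticalDiagram}).

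The main obstacle I anticipate is making the ``winding forces a backtrack'' step fully rigorous rather than merely pictorial: one must pin down exactly which geometric feature of the pinwheel (the relative positions of the four outer rectangles and the side of each through which a connecting edge must pass) produces the obstruction, and check that this feature is genuinely preserved under deformation and not an artifact of the particular drawing. I would handle this by working with the induced tile order of the local tiling around the pinwheel — which is a deformation invariant by the arguments of Section~2.3 — and invoking Dawson's characterization (the theorem of \cite{Dawson} quoted above) in the contrapositive: the pinwheel tile order is \emph{not} neat as a tile order, so it cannot be realized with all Hasse-diagram lines within $\epsilon$ of axis-parallel; but a neat double diagram, restricted to the subrectangle containing the pinwheel and its admissible tiling, would realize exactly that forbidden configuration. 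This reduces the geometric estimate to Dawson's already-established result and is the cleanest route to closing the gap.
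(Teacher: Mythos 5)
Your argument follows the same route as the paper, whose entire proof is one sentence: \emph{by inspection, the pinwheel is not neat, so a neat diagram cannot induce a pinwheel}, with composability then supplied by Lemma~\ref{Lem_NoPinwheelsComposable}. Your reduction of that ``inspection'' step to Dawson's theorem that the pinwheel tile order is not neat is exactly the content the paper leaves implicit, so the proposal is correct and essentially identical in approach.
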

\begin{proof}
By inspection, we see that the pinwheel is not neat, so a neat diagram cannot induce a pinwheel. 
\end{proof}

I expect that a diagram which does not induce full pinwheels is neat, as is the case for tile orders. We are ready now to prove the full invariance under deformation for double diagrams.

\begin{theorem}[Invariance of Value under Deformation] \label{Thm:InvariantValue}
Let $h : G \times [0,1] \to R$ be a deformation of double diagrams and $\mu$ a valuation of $h(-, 0)$. Suppose that $h(-,0)$ is neat so that the value $\mu(h(-,0))$ exists. Then $\mu(h(-,t))$ exists and equals $\mu(h(-,0))$ for all $t$.
\end{theorem}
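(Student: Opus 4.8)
The plan is to combine the two tools already established: Lemma \ref{Lem_ValueInvariant}, which says that a deformation of double diagrams preserves the value provided each intermediate diagram has an admissible \emph{and composable} tiling, and Lemma \ref{Lem_NeatIffComposable} (together with Lemma \ref{Lem_NoPinwheelsComposable}), which produces a composable admissible tiling for any neat diagram. So the only gap to bridge is showing that every stage $h(-,t)$ of the deformation is neat, not just the starting diagram $h(-,0)$.

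\textbf{First step.} I would argue that neatness is preserved along a deformation. Suppose $h(-,0)$ is neat and fix $t \in [0,1]$. Given $\epsilon > 0$, neatness of $h(-,0)$ supplies a diagram $(h(-,0))_\epsilon$ with all vertical (resp.\ horizontal) edges having slope less than $\epsilon$, together with a deformation $h_\epsilon$ from $h(-,0)$ to $(h(-,0))_\epsilon$. Now concatenate the reverse of the restriction $h|_{G \times [0,t]}$ with $h_\epsilon$: this is a deformation of $h(-,t)$ first back to $h(-,0)$ and then onward to $(h(-,0))_\epsilon$. Since the space of double diagrams (with a fixed underlying double graph $G$ and rectangle $R$) is closed under concatenation of deformations — each stage is required to be a double diagram, and reparametrizing $[0,t]\cup[0,1]$ to $[0,1]$ is continuous — this exhibits a deformation of $h(-,t)$ to a diagram whose edges are as straight as desired. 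Hence $h(-,t)$ is neat for every $t$.

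\textbf{Second step.} By Lemma \ref{Lem_NeatIffComposable}, neatness of $h(-,t)$ implies $h(-,t)$ induces no full pinwheels, and then by Lemma \ref{Lem_NoPinwheelsComposable} it admits a composable tiling; by Proposition \ref{Lem:DiagramsTileable} we may moreover take this tiling admissible (the construction in Lemma \ref{Lem_NoPinwheelsComposable} modifies the naive admissible tiling $T_\varphi$ while keeping it admissible). Thus for every $t \in [0,1]$ there is an admissible and composable tiling $T_t$ of $h(-,t)$. This is exactly the hypothesis of Lemma \ref{Lem_ValueInvariant}, whose conclusion is that $\mu(h(-,t))$ exists and equals $\mu(h(-,0))$ for all $t$, which is the statement of the theorem. (In particular, applying this at $t=0$ and comparing with Lemma \ref{Lem_NeatIffComposable} confirms the parenthetical remark that $\mu(h(-,0))$ itself exists because $h(-,0)$ is neat.)

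\textbf{Main obstacle.} The delicate point is the first step: making precise that reversing and concatenating deformations stays within the class of double diagrams, and that this genuinely witnesses neatness of the intermediate stage. The subtlety is that the definition of \emph{neat} only asks for \emph{some} deformation to a nearly-straight diagram, with no constraint on what it does in between; so once we know each stage of a deformation is itself a double diagram, concatenation is unproblematic and the argument goes through. An alternative, slightly more hands-on route — if one wanted to avoid concatenation — would be to observe directly that a sufficiently small perturbation of a neat diagram is neat, and then invoke compactness of $[0,1]$ exactly as in the proof of Lemma \ref{Lem_ValueInvariant} to cover the interval by finitely many such neighborhoods; but the concatenation argument is cleaner and I would present that.
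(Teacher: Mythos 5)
Your proof is correct and follows essentially the same route as the paper: establish that every stage $h(-,t)$ is neat, invoke Lemma \ref{Lem_NeatIffComposable} and Lemma \ref{Lem_NoPinwheelsComposable} to obtain admissible composable tilings, and conclude via Lemma \ref{Lem_ValueInvariant}. The only difference is that the paper simply asserts that neatness propagates along the deformation, whereas you supply the (correct) justification by reversing and concatenating deformations.
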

\begin{proof}
Since $h(-,0)$ is neat, $h(-,t)$ is neat for all $t$. Therefore, by Lemma \ref{Lem_NeatIffComposable}, each $h(-,t)$ does not induce any full pinwheels, and so by Lemma \ref{Lem_NoPinwheelsComposable}, it admits a composable tiling. Then the hypotheses of Lemma \ref{Lem_ValueInvariant} are satisfied, so the desired conclusion follows.
\end{proof}

\section{Equipment Diagrams}\label{Sec_EquipmentDiagrams}

Now that we have seen that the value of a composable double diagram is invariant under deformation, we turn our attention to equipment diagrams. Equipment diagrams differ from double diagrams in that they allow the bending of vertical edges left and right. Bending left and right correspond to moving to the companion or conjoint respectively of a vertical arrow via a (co)unit. In this section, we will show that value of an equipment diagram is invariant under deformation \emph{up to the insertion of the correct (co)units}. We will proceed with this argument in several steps.
\begin{itemize}
    \item In Section 3.1, we look more closely at the algebra of companions and conjoints in preparation for the definition and proof of invariance.
    \item In Section 3.2, we define an equipment diagram and the notion of an induced valuation. We then prove functoriality for induced diagrams (Lem \ref{Lem_InducedValueFacts}).
    \item In Section 3.3, we show that every equipment diagram admits an admissible tiling and show that the value of an equipment diagram is invariant under deformation (Theorem \ref{Thm:InvariantValueEq})
\end{itemize}

\subsection{Companions, Conjoints, and Bends}\label{Sec_Bends}

In Section \ref{Sec_SpiderLemma}, we saw how nodes could be slid around bends in vertical wires through the definition of the conjoint (resp. companion) \emph{mate}. That is, if we define the conjoint mate $\ingraph{h2CellRYr}$ of $\ingraph{v2CellRY}$ to be $\ingraph{ZigZag2Cell_Rh_RY}$, then we can then slide beads around bends thanks to the following equations:
$$\ingraph{Mate1} = \ingraph{Mate3} = \ingraph{Mate2}.$$

Although the same node appears on the same edges in both the left and right of this equation, they mean different things. If we had a value of $\tinygraph{v2CellRY}$ in mind then we could use it to evaluate $\tinygraph{h2CellRYr}$ by the definition above, and if we had a value of $\tinygraph{h2CellRYr}$ in mind we could use it to define $\tinygraph{v2CellRY}$ by bending in the other way. But the value of $\tinygraph{v2CellRY}$ and $\tinygraph{h2CellRYr}$ are not equal in general; if one of the wires is not evaluated to an identity, then they won't even have the same signature and so can't be compared for equality. This situation is reflected generally in the Spider Lemma (Lemma \ref{Lem_SpiderLemma}).

Since a deformation of an equipment diagram can change the signature of the nodes, we can't simply transport a valuation accross a deformation as we did for double diagrams. We have to keep track of the companion and conjoint (co)units to add in so that the Spider Lemma holds. To make sure we know what the right (co)units to add are, we embark on a brief study of their algebra.

Consider the situation of a single vertical wire. This may have any of three orientations:
\begin{center}
Down (arrow): $\midgraph{Arrow_D}$, \quad
Right (conjoint): $\midgraph{Arrow_R}$, and \quad
Left (companion): $\midgraph{Arrow_L}$.
\end{center}

There are four bends which change direction:
\begin{center}
$\midgraph{BendArrow_DR}, \quad \midgraph{BendArrow_DL}, \quad \midgraph{BendArrow_LD}, \quad \midgraph{BendArrow_RD},$
\end{center}
which correspond to the respective units and counits. Note that each two of these bends may be composed along the edge in exactly one way (preserving the orientation of the edge). Therefore, we get the small category $\Ka$ of possible composites shown below.
\[
\begin{tikzcd}[column sep = huge]
\midgraph{Arrow_L} \arrow[bend left = 25]{r}{\midgraph{BendArrow_RD}} & \midgraph{Arrow_D} \arrow[bend left = 25]{r}{\midgraph{BendArrow_DR}} \arrow[bend left = 25]{l}{\midgraph{BendArrow_DL}} & \midgraph{Arrow_R} \arrow[bend left = 25]{l}{\midgraph{BendArrow_LD}}
\end{tikzcd}
\]

The kink identities make this category into a groupoid with $\midgraph{BendArrow_RD}\inv = \midgraph{BendArrow_DL}$ and $\midgraph{BendArrow_LD}\inv = \midgraph{BendArrow_DR}$. We will write composition in diagrammatic order. Undrawn above are the composites $\midgraph{BendArrow_RD} \cdot \midgraph{BendArrow_DR} = \midgraph{BendArrow_RR}$ and $\midgraph{BendArrow_LD} \cdot \midgraph{BendArrow_DL} = \midgraph{BendArrow_LL}$. It is quick to verify that these and those drawn above are the only morphisms in $\Ka$. In terms of equipments, this means that a composite of (co)units of a vertical arrow must equal one of the bends described by a morphism in $\Ka$.

We will replace $\Ka$ by an isomorphic groupoid that will be easier to refer to and reason with. The groupoid $\Ka'$ will be generated by the data drawn below,
\[
\begin{tikzcd}[column sep = large]
\ell \arrow[loop left]{l}{r\ell} \arrow[bend left = 25]{r}{rd} & d \arrow[out=-105,in=-75, loop,distance=0.5cm,swap]{}{dd} \arrow[bend left = 25]{l}{d\ell}
\arrow[bend left = 25]{r}{dr} & r \arrow[loop right]{r}{\ell r} \arrow[bend left = 25]{l}{\ell d}
\end{tikzcd}
\]
subject to the relations $(rd)\inv = (d\ell)$ and $(dr)\inv = (\ell d)$, with $r\ell$, $dd$ and $\ell r$ denoting the respective identities. We can read the morphism $xy$ as the line going from the $x$ boundary to the $y$ boundary. The object labels are the three direction $\ell$, $d$, and $r$ seen as the objects of $\Ka$. The morphism labels come from watching the way the edge flows through the sides of the square, with both the top and bottom of the square labeled as $d$. If we further write $rr : \ell \to r$ and $\ell \ell : r \to \ell$ as the unique composites with those signatures respectively, then we can describe composition in $\Ka'$ in a rather simple way. 

For a direction $x \in \{\ell, d, r\}$, let $\bar{x}$ be the opposite direction. That is, $\bar{d} = d$, $\bar{\ell} = r$, and $\bar{r} = \ell$. There is a unique morphism $x \to y$ in $\Ka'$ and it is labeled $\bar{x}y$. Since $\bar{x}y \cdot \bar{y}z = \bar{x}z$, we see that composition in $\Ka$ is given by the relations $y\bar{y} = (\,\,)$ on (suitable) words in $\{\ell, d, r\}$. We can package this reasoning into the following lemma.

\begin{lemma}\label{Lem_WordsCompose}
Let $(w_{i,1} w_{i,2})_{1 \leq i \leq k}$ be a sequence of two letter words in the alphabet $\{\ell, d, r\}$ so that $\overline{w_{i,2}} = w_{(i+1),1}$ for each $1 \leq i < k$. Then $w = \prod_i  w_{i,1} w_{i,2}$ denotes a morphism in $\Ka$ which equals $w_{1,1}w_{k,2}$.
\end{lemma}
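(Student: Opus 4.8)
The plan is to reduce the statement to a routine induction on the length $k$ of the sequence, using the groupoid structure of $\Ka$ (equivalently $\Ka'$) established just above. The key observation is that the hypothesis $\overline{w_{i,2}} = w_{(i+1),1}$ is exactly the condition that $w_{i,1}w_{i,2}$ and $w_{(i+1),1}w_{(i+1),2}$ are composable as morphisms in $\Ka$: by the preceding discussion, the unique morphism $x \to y$ in $\Ka$ is labelled $\bar{x}y$, so $w_{i,1}w_{i,2}$ is the unique morphism $\overline{w_{i,1}} \to w_{i,2}$, and $w_{(i+1),1}w_{(i+1),2}$ is the unique morphism $\overline{w_{(i+1),1}} = w_{i,2} \to w_{(i+1),2}$. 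Hence the product $w = \prod_i w_{i,1}w_{i,2}$ is a well-formed composite in $\Ka$, and in particular denotes a single morphism.

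For the base case $k = 1$ there is nothing to prove. For the inductive step, suppose the claim holds for sequences of length $k-1$; then $\prod_{i=1}^{k-1} w_{i,1}w_{i,2} = w_{1,1}w_{(k-1),2}$, the unique morphism $\overline{w_{1,1}} \to w_{(k-1),2}$. Composing with the last factor $w_{k,1}w_{k,2}$, which is the unique morphism $\overline{w_{k,1}} = w_{(k-1),2} \to w_{k,2}$ (using the hypothesis for $i = k-1$), we obtain the unique morphism $\overline{w_{1,1}} \to w_{k,2}$ in $\Ka$, which by the labelling convention is precisely $w_{1,1}w_{k,2}$. This closes the induction.

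The only genuine content — and the only potential obstacle — is making sure the bookkeeping of the direction labels is consistent: one must check that ``the unique morphism $x \to y$ is labelled $\bar{x}y$'' together with the composability condition really does make every partial product a single morphism with the claimed source and target, so that the composites never fail to exist and are never ambiguous. But this is guaranteed because $\Ka$ is a groupoid on the three objects $\{\ell, d, r\}$ with exactly one morphism between any ordered pair (as verified in the enumeration of morphisms of $\Ka'$ above), so every hom-set is a singleton and composition is forced. Thus the lemma is really just the statement that in a groupoid with singleton hom-sets, a composable string of morphisms composes to the unique morphism from the first source to the last target, specialized to the naming scheme for the bends.
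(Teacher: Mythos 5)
Your proof is correct and matches the paper's reasoning: the paper offers no separate argument but simply ``packages'' the preceding observation that $\Ka$ has a unique morphism $x \to y$ labelled $\bar{x}y$ and that $\bar{x}y \cdot \bar{y}z = \bar{x}z$, which is exactly the inductive step you carry out. Your explicit induction and the remark that $\Ka$ is the codiscrete groupoid on $\{\ell, d, r\}$ are a faithful (and slightly more careful) rendering of the same argument.
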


Since $\Ka$ and $\Ka'$ are isomorphic, we will refer to the bends of $\Ka$ by the labels in $\Ka'$, and equate the two groupoids without confusion. We now have the tools necessary to define and reason about equipment diagrams and their valuations.

\subsection{Equipment Diagrams and Valuations}\label{Subsec_EquipmentDiagrams}

In the previous section, we saw how the value of an equipment diagram could change (in a structured way) through deformation. But in order to know how to compose the new value of a node with the correct (co)units, we need to know how its signature has changed from before the deformation. Therefore, the data of an equipment diagram must contain the signatures of its nodes. For this reason, we will consider the rectangles $R_n$ which surround each node to be part of the \emph{structure} of an equipment diagram; embeddings with a different choice of node-rectangles will yield different diagrams. 

\begin{definition}\label{Def_EquipmentDiagram}
An \emph{equipment diagram} is a pair $(\varphi, \Ra)$ of an embedding $\varphi : G \to R$ together with a choice $\Ra$ of rectangles $R_n \subseteq R$ for each node $n \in G_0 - \partial G$ such that 
\begin{enumerate}
    \item $\varphi_h : hG \to R$ is a horizontal diagram,
    \item $\varphi_v : vG \to R$ satisfies the following conditions:
    \begin{enumerate}
        \item $\varphi(\partial \,vG) \subseteq \interior \partial R$, and
        \item \label{EquipmentC1} For all edges $e \in vG_1$, if $t_0 < t_1$ and $e(t_0) =_h e(t_1)$, then $e(t_0) <_v e(t_1)$, and
        \item For all edges $e \in vG_1$, if $e(0)$ or $e(1)$ is in $\partial R$, but they are not on opposing vertical boundaries of $R$ in $\partial_v R$, then $e(0) <_v e(1)$.
    \end{enumerate}
    \item The \emph{node-rectangles} $R_n \in \Ra$ contain $n$ in their interior, and satisfy the following.
    \begin{enumerate}
        \item $R_n$ contains no nodes other than $n$.
        \item $R_n$ contains a single connected component of the image of $\varphi$.
        \item Restricted to $R_n$, $\varphi$ satisfies conditions $(1)$ and $(2)$ above.
    \end{enumerate}
\end{enumerate}
\end{definition}

The progressivity of the vertical diagram $\varphi_v$ has been replaced by two conditions ensuring that the vertical edges never flow upwards. The condition \ref{EquipmentC1} ensures, in particular, that an edge may never fully loop around a node, and 2c ensures that a vertical edge may never `flow upwards'. As with double diagrams, the third condition ensures that equipment diagrams may be interpreted in an equipment by making the signature well defined. Again, it may be ensured by drawing each node as a small square, where the horizontal edges must pass through the left and right sides, and the vertical edges can pass through any side as long as they are pointing the correct way.

It will be useful to know the direction of flow of a vertical edge $e$ through a rectangle $S \subseteq R$. First, we will label the sides of any rectangle with flow directions $\{\ell, d, r\}$ with the top and bottom sides being labeled $d$ and the left and right sides being labeled $\ell$ and $r$ respectively. We may then make the following definition.

\begin{definition}
Let $(\varphi : G \to R, \Ra)$ be an equipment diagram, let $S \subseteq R$ be a rectangle, and let $e$ be a vertical edge which intersects $S$. Then the \emph{direction of flow of $e$ within $S$}, $d_{\varphi|S}(e) \in \Ka$ (or $d_S(e)$ when $\varphi$ is apparent), is $d^0_{\varphi|S}(e)d^1_{\varphi|S}(e)$ for $d^i_{\varphi|S}(e) \in \{\ell, d, r\}$ where
\begin{itemize}
    \item if $S$ contains no node, then $d^0_{\varphi|S}(e)$ is the side through which $e$ enters $S$, and $d^1_{\varphi|S}(e)$ is the side through which $e$ leaves $S$,
    \item if $S$ contains a node $n$ and $e(0) = n$, then $d^1_{\varphi|S}(e)$ is the side through which $e$ leaves $S$ and $d^0_{\varphi|S}(e) = \overline{d^1_{\varphi|S}(e)}$, and
    \item if $S$ contains a node $n$ and $e(1) = n$, then $d^0_{\varphi|S}(e)$ is the side through which $e$ enters $S$ and $d^1_{\varphi|S}(e) = \overline{d^0_{\varphi|S}(e)}$.
\end{itemize}

In other words, the direction of flow of $e$ in $S$ is the morphism of $\Ka$ whose domain is opposite to the side $e$ enters $S$ and whose codomain is the side $e$ leaves $S$, with edges incident to nodes assumed to be straight. This defines a graph homomorphism from $G$, divided at its intersection with the tiling, to $\Ka$.
\end{definition}

The direction of flow $d_{\varphi|S}(e)$ is the direction that $e$ is flowing through $S$ if it contained at most a single bend within $S$. Note that if $S$ contains a node, then $e$ is considered to be unbent in $S$.

We can now define the signature of a node $n$ in $(\varphi, \Ra)$ much as we did in the case of double diagrams, but keeping track of the possibility that vertical edges may be flowing horizontally into $n$. That is, we define the vertical and horizontal inputs and outputs of $n$ to be the set of edges intersecting the respective sides of $R_n$, ordered left to right and top to bottom. This time, however, vertical edges $e$ may enter or leave through the vertical boundary of $R_n$. Since $R_n$ is a rectangle containing a node, we see that $d_{\varphi|R_n}(e)$ is an identity arrow of $\Ka$ for any vertical $e$. In the sets of inputs and outputs, we put in the companion ($r \ell$), arrow ($dd$), or conjoint ($\ell r$) of $e$ according to $d_{\varphi|R_n}(e)$.

A \emph{valuation} $\mu$ for an equipment diagram $(\varphi, \Ra)$ is a collection of functions assigning the parts of $\varphi$ to objects, arrows, and cells in an equipment $\Ea$ so that the signature of the value of a node consists of the values of its signature (as was the case with valuations of double diagrams). We would like to define the composite of the images of nodes of $\mu$ to be the \emph{value} $\mu(\varphi, \Ra)$, but as before there is no guarantee that the image denotes a compatible arrangement in $\Ea$. In fact, this time the situation is worse, since vertical edges may bend as they move between nodes, meaning that we will need to add in the correct (co)unit of the conjunction/companionship. To do this, we will again turn to tilings. But first we must discuss the way valuations change under a change in choice of node-rectangles.

As discussed in Subsection \ref{Sec_Bends}, if we know the value of a node with a vertical edge in a particular position in its signature, then we can deduce the value of the node with that vertical edge in a different position in its signature via the Spider Lemma. Since the choice $\Ra$ of node-rectangles in an equipment $(\varphi, \Ra)$ encodes the desired ``original signature'' of the nodes of $\varphi$, a change in choice of node-rectangles should induce a structured change in valuation. Consider, as a paradigmatic example, the simple equipment diagram
$$\fullgraph{DeformEx},$$
where the node is $n$, the vertical edge is $e$, the outer rectangle is $R_n$, and the inner rectangle is $S_n$. Suppose we have a valuation $\mu$ of $(\varphi, \Ra)$, and we are attempting to deduce a valuation $\mu^{\Ra}_{\Sa}$ of $(\varphi, \Sa)$. Then we know the value $\mu(n)$ of $n$ where $e$ is considered to enter it from left as a conjoint; that is, $d_{R_n}(e) = \ell r$. But $e$ enters $n$ from the top in $S_n$, $d_{S_n}(e) = dd$, so it will appear as an arrow in this signature. The bend in the picture, which turns the rightward flowing $e$ in $R_n$ into a downward flowing $e$ in $S_n$, is $\ell d = d^0_{R_n}(e)\overline{d^0_{S_n}(e)}$, flowing from the direction $e$ enters $R_n$ to the direction opposite that from which $e$ enters $S_n$. In order to satisfy the Spider Lemma, $\mu^{\Ra}_{\Sa}(n)$ must therefore involve the opposite bend, so that the composite of the two will equal $\mu(n)$. We formalize this now.

\begin{definition}
Let $n$ be a node in $G$, $e$ a vertical edge incident to it, and let $(\varphi : G \to R, \Ra)$ and $(\varphi, \Sa)$ be equipment diagrams. The \emph{bend in $e$ from $R_n$ to $S_n$}, $\Delta^{R_n}_{S_n}(e)$, is defined to be
\begin{itemize}
    \item $d^0_{R_n}(e)\overline{d^0_{S_n}(e)}$ if $e(1) = n$, or
    \item $\overline{d^1_{S_n}(e)}d^1_{R_n}(e)$ if $e(0) = n$.
\end{itemize}

Note that $\Delta^{R_n}_{R_n}(e) = d_{R_n}(e)$.
\end{definition}

As before, a \emph{deformation} $h$ of an equipment diagram $(\varphi : G \to R, \Ra)$ into $(\phi : G \to R, \Sa)$ is a continuous function $h : G \times [0,1] \to R$ for which $h(-,0) = \varphi$, $h(-,1) = \phi$, and $h(-,t)$ is an equipment diagram for all $t$. We do not ask that the choice of node-rectangles vary continuously in $t$. This means that the constant function at $\varphi$ may be considered a deformation from $(\varphi, \Ra)$ to $(\varphi, \Sa)$ for any suitable $\Ra$ and $\Sa$. 

\begin{definition}
Let $n$ be a node in $G$, let $(\varphi : G \to R, \Ra)$ and $(\phi, \Sa)$ be equipment diagrams, and let $h$ be a deformation from $\varphi$ to $\phi$. Given a valuation $\mu$ of $(\varphi, \Ra)$, the \emph{induced valuation} $\mu^{\Ra}_{\Sa}$ of $(\phi, \Sa)$ is equal to $\mu$ on all regions and edges, but at each node $n$ is defined to be $\mu(n)$ composed with $\big(\Delta^{R_n}_{S_n}(e)\big)\inv$ for all vertical edges $e$ incident to $n$.
\end{definition}

That the composite involved in defining $\mu^{\Ra}_{\Sa}(n)$ exists can be shown by a routine combinatorial argument on $n$'s signature, and its uniqueness can be deduced from the general associativity of composition. Note that the constant function at $\varphi$ is always a deformation from $\varphi$ to itself, that a deformation may be reversed, and that deformations from $\varphi$ to $\phi$ and from $\phi$ to $\psi$ may be concatenated into a deformation from $\varphi$ to $\psi$.
\begin{lemma}{Functoriality}\label{Lem_InducedValueFacts}
Let $(\varphi, \Ra) \xto{h} (\phi, \Ta) \xto{h'} (\psi, \Sa)$ be a chain of equipment diagram deformations, and let $\mu$ be a valuation of $(\varphi, \Ra)$. Then
\begin{enumerate}
    \item $\mu^{\Ra}_{\Ra} = \mu$,
    \item $\left( \mu^{\Ra}_{\Ta} \right)^{\Ta}_{\Sa} = \mu^{\Ra}_{\Sa}$, and 
    \item $\left( \mu^{\Ra}_{\Ta}\right)^{\Ta}_{\Ra} = \mu$.
\end{enumerate}
\end{lemma}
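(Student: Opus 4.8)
The plan is to reduce all three identities to a single cocycle law in the groupoid $\Ka$ of bends, and then feed that into the functoriality of mates established in Section~\ref{Sec_Bends}. The first observation is that $\mu$ and each of its induced valuations agree, by definition, on every region and every edge; so it suffices to verify (1), (2), and (3) at a node $n$. There an induced valuation modifies $\mu(n)$ by whiskering with one $\Ka$-bend (co)unit along each vertical edge $e$ incident to $n$, and since whiskerings along distinct boundary segments of a single node commute by the interchange law, I may treat the incident edges one at a time. Each such $e$ has exactly one endpoint at $n$ --- condition~\ref{EquipmentC1} forbids both endpoints of a vertical edge from coinciding --- so the analysis splits into the two cases $e(1)=n$ and $e(0)=n$. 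That the whiskerings in question are defined is the routine signature check already flagged after the definition of $\mu^{\Ra}_{\Sa}$.

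The engine is the identity
\[ \Delta^{R_n}_{T_n}(e)\cdot\Delta^{T_n}_{S_n}(e) = \Delta^{R_n}_{S_n}(e) \quad (e(1)=n), \qquad \Delta^{T_n}_{S_n}(e)\cdot\Delta^{R_n}_{T_n}(e) = \Delta^{R_n}_{S_n}(e) \quad (e(0)=n), \]
for any three choices $\Ra,\Ta,\Sa$ of node-rectangles. I would prove it by writing out the defining two-letter words of the two factors in either case: at the seam the two letters are $\overline{d^{i}_{T_n}(e)}$ and $d^{i}_{T_n}(e)$ (with $i=0$ or $i=1$ according to the case), which are opposite, so Lemma~\ref{Lem_WordsCompose} collapses the product to its first and last letters --- and by inspection these are exactly the letters defining $\Delta^{R_n}_{S_n}(e)$. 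Specialising with $S_n=R_n$ and using the already-noted fact $\Delta^{R_n}_{R_n}(e)=d_{R_n}(e)$, which is an identity of $\Ka$ because $R_n$ contains a node, yields $\Delta^{S_n}_{R_n}(e)=\big(\Delta^{R_n}_{S_n}(e)\big)^{-1}$ as well.

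Now the lemma falls out. For (1): $\mu^{\Ra}_{\Ra}(n)$ is $\mu(n)$ whiskered with $\big(\Delta^{R_n}_{R_n}(e)\big)^{-1}=\id$ along each incident $e$, hence equals $\mu(n)$, so $\mu^{\Ra}_{\Ra}=\mu$. For (2): unwinding the definitions, $\big(\mu^{\Ra}_{\Ta}\big)^{\Ta}_{\Sa}(n)$ is obtained from $\mu(n)$ by whiskering first with $\big(\Delta^{R_n}_{T_n}(e)\big)^{-1}$ and then with $\big(\Delta^{T_n}_{S_n}(e)\big)^{-1}$ along each incident $e$; by the functoriality of (co)unit composition from Section~\ref{Sec_Bends}, successively whiskering with two such (co)units along $e$ is whiskering with their $\Ka$-composite, taken in the order dictated by the side of $n$ on which $e$ sits, and by the cocycle identity (inverted) this composite is $\big(\Delta^{R_n}_{S_n}(e)\big)^{-1}$ --- which is exactly the recipe for $\mu^{\Ra}_{\Sa}(n)$. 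Finally (3) is the case $\Sa=\Ra$ of (2) followed by (1): $\big(\mu^{\Ra}_{\Ta}\big)^{\Ta}_{\Ra}=\mu^{\Ra}_{\Ra}=\mu$.

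The only real difficulty is bookkeeping rather than mathematics: one must keep the flow-direction labels, the split $e(1)=n$ versus $e(0)=n$, and the order of composition in $\Ka$ against the order of whiskering mutually consistent. The point to watch is that an incoming edge and an outgoing edge stack their (co)units in opposite orders relative to $\Ka$-composition, and this ``op''-switch is precisely why the cocycle identity appears with the two composition orders above; once this is matched up, Lemma~\ref{Lem_WordsCompose} and the explicit presentation $\Ka'$ make every step mechanical.
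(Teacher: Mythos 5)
Your proof is correct and follows essentially the same route as the paper's: item (1) via $\Delta^{R_n}_{R_n}(e)=d_{R_n}(e)$ being an identity of $\Ka$, item (2) via the two-letter-word cancellation of Lemma~\ref{Lem_WordsCompose} applied edge-by-edge at $n$, and item (3) as a corollary of (1) and (2). The only cosmetic difference is that you state the cocycle law for the $\Delta$'s themselves and then invert, where the paper computes directly with the inverses $\big(\Delta^{R_n}_{T_n}(e)\big)^{-1}$, $\big(\Delta^{T_n}_{S_n}(e)\big)^{-1}$; the cancellation is identical.
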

\begin{proof}
Equation (3) follows immediately from (1) and (2). Equation (1) holds because $\mu^{\Ra}_{\Ra}(n)$ is $\mu(n)$ composed with $\big(\Delta^{R_n}_{R_n}(e)\big)\inv = d_{R_n}(e)\inv$ for vertical edges $e$ incident to $n$; by construction $d_{R_n}(e)$ corresponds to an identity 2-cell, and so does not change the value of $\mu(n)$. Now we turn to Equation (3).

Recall that $\mu^{\Ra}_{\Ta}(n)$ is $\mu(n)$ composed with $\big(\Delta^{R_n}_{T_n}(e)\big)\inv$, and $\left( \mu^{\Ra}_{\Ta} \right)^{\Ta}_{\Sa}(n)$ is $\mu^{\Ra}_{\Ta}(n)$ composed with $\big(\Delta^{T_n}_{S_n}(e)\big)\inv$ for the vertical edges $e$ incident to $n$. Consider, without loss of generality, the edges $e$ for which $e(1) = n$. Then $\big(\Delta^{R_n}_{T_n}(e)\big)\inv = d^0_{T_n}(e)\overline{d^0_{R_n}(e)}$ and $\big(\Delta^{T_n}_{S_n}(e)\big)\inv = d^0_{S_n}(e)\overline{d^0_{T_n}(e)}$, so their composite in $\left( \mu^{\Ra}_{\Ta} \right)^{\Ta}_{\Sa}(n)$ will be $$d^0_{S_n}(e)\overline{d^0_{T_n}(e)}d^0_{T_n}(e)\overline{d^0_{R_n}(e)} = d^0_{S_n}(e)\overline{d^0_{R_n}(e)} = \big(\Delta^{R_n}_{S_n}(e)\big)\inv.$$
But $\big(\Delta^{R_n}_{S_n}(e)\big)\inv$ is precisely what $\mu(n)$ is composed with to get $\mu^{\Ra}_{\Sa}(n)$. So, $\left( \mu^{\Ra}_{\Ta} \right)^{\Ta}_{\Sa} = \mu^{\Ra}_{\Sa}$.
\end{proof}

We seek to show that the value of equipment diagrams is invariant under deformation in the sense that $\mu(\varphi, \Ra) = \mu^{\Ra}_{\Sa}(\phi, \Sa)$ when there is a deformation of $\varphi$ into $\phi$.

\subsection{Tiling and Invariance}

In this section, we will define admissible tilings for equipment diagrams, and show that all admissible tilings of an equipment diagram have the same value. We proceed in much the same way as in Section \ref{SecTilings}.

An admissible tiling $T$ of an equipment diagram $(\varphi, R_n)$ is tiling for which $\varphi$ restricts to an equipment diagram $\varphi_i$ on each tile $T_i$, and for which each tile contains at most one node and connected component of $\varphi$. We further require that if $T_i$ contains a node $n$, then the node-rectangle around $n$ in $\varphi_i$ is $T_i$ itself. If $\Ta \subseteq T$ denotes the set of tiles which contain nodes, then this condition ensures that $(\varphi, \Ta)$ is also an equipment diagram.

\begin{construction}
There exists an admissible tiling $T_\varphi$ for any equipment diagram $(\varphi, \Ra)$.
\end{construction}
\begin{proof}
The tiling may be constructed as in Section \ref{Sec_DiagramsTileable}, but with even less care taken for vertical edges. Namely, we construct $T_\varphi$ by adding in the rectangles $R_n \in \Ra$, together with the tiling of each horizontal edge according to the proscription in Section \ref{Sec_DiagramsTileable}. We will then tile the vertical edges in a similar way, add those to $T_\varphi$, and then tile the rest naively and add those tiles as well.

To tile a vertical edge $e$ on $R - \bigcup \interior R_n$, first surround $e$ with a tube of diameter $\epsilon > 0$ so that it avoids all other tubes around all other edges (vertical or horizontal). Then, moving along the orientation of $e$, we will place a rectanglular tile within this $\epsilon$-tube. Restricted to one such tile, $\varphi$ clearly satisfies conditions (1) of Def. \ref{Def_EquipmentDiagram} (since there are no horizontal edges), (3) (there are no nodes), and (2a) and (2b) (since $\varphi$ satisfies these). The only nontrivial condition to satisfy is (2c), which, since there are no nodes, means that we must place tiles so that $e(0)$ and $e(1)$ are on opposite vertical sides or $e(0) <_v e(1)$ when restricted to a tile.

Note that by condition (2b) of $\varphi$, if $e$ intersects enters and exits one of our tiles through the same vertical boundary, then we must have $e(0) <_v e(1)$ restricted to that tile. Therefore, the only constraint we must satisfy is that $e$ can never leave through the top side of a tile, or equivalently, can never enter through the bottom. So we will tile $e$ naively, and then adjust our tiling to match this constraint.

Tile $e$ naively by placing a rectangle in the $\epsilon$-tube around $e$ that does not overlap with any other rectangles already placed and only contains a single connected component of $e$. These tiles are ordered by the ordering on $e$. Note that this tiling begins at a node-rectangle and ends at one as well, so $e$ cannot enter the first tile through the bottom (it would have to leave the node-rectangle through the top) or leave the last tile through the top. Therefore, the problematic tiles will be bookended by pairs of tiles, one which $e$ enters through a valid side but leaves through the top, and one which $e$ enters from the bottom but leaves through a valid side. Let $T_i$ be a tile which $e$ leaves through the top, but enters through a side that is not the bottom. Let $T_j$ be the next tile in the tilings which $e$ exits through side which is not the top. Therefore, $e$ must enter $T_k$ through the bottom and leave through the top for all $i < k < j$. By (2b) and continuity, if $e(t_0)$ is in $T_k$ then for all $t > t_0$, $e(t) >_h e(t_0)$ or $e(t) <_h e(t_0)$ in $T_k$. Therefore, $e$ may be tiled within $T_k$ by a number of tiles, through each of which $e$ flows left to right or right to left, except for the first, which $e$ enters through the bottom, and last, which $e$ leaves through the top. 
$$\ingraph{equipment_tiling}$$

Having done this to each $T_k$, we may restrict ourselves to problem-pairs ($T_i$, $T_j$) where $j = i + 1$. Let $T'$ be the rectangle formed by extruding the intersection of the top side of $T_i$ with the bottom side of $T_j$ downwards to the bottom of $T_i$ and upwards to the top of $T_j$. Since $e$ enters $T_i$ through a valid side, and since by construction $T_j$ only contains one connected component of $e$, $e$ cannot enter $T'$ through the bottom or top, and therefore must enter through the left or right, which is valid. Likewise, since $e$ leaves $T_j$ validly, and $T_i$ contains only one connected component of $e$, $e$ cannot leave $T'$ through the top or bottom, and so must left or right, which is valid. Replace $T_i$ by $T_i - T'$ and $T_j$ by $T_j - T'$, and add $T'$ into the tiling. 
$$\ingraph{equipment_tiling2a} \quad \longrightarrow \quad \ingraph{equipment_tiling2b}$$
Having done this for all problem pairs, we are left with an admissible tiling of $e$.
\end{proof}

If $\mu : \varphi \to \Ea$ is a valuation for $(\varphi, \Ra)$, and $T$ an admissible tiling, then $T$ denotes a compatible arrangement of $2$-cells in $\Ea$ as follows:
\begin{itemize}
    \item If the restriction $\varphi_i$ contains the node $n$, then $T_i$ denotes the double cell $\mu^{\Ra}_{\Ta}(n)$.
    \item If the restriction $\varphi_i$ has no node, then it either contains a single edge or no edge. If it contains no edge, then it denotes the identity on the value of the region contained within it. If it contains a horizontal edge, then it denotes the identity on its value. If it contains a vertical edge $e$ then it denotes (co)unit of the value of $e$ corresponding to $d_{\varphi|T_i}(e) \in \Ka$. 
\end{itemize}

Since every compatible arrangement of cells in an equipment is composable (pinwheels may always be avoided by bending), the arrangment denoted by $T$ has a composite which we will denote $\mu(T)$. Note that by definition, $\mu(T) =  \mu^{\Ra}_{\Ta}(T)$. As before, we would like to define $\mu(\varphi, \Ra)$ to be $\mu(T)$ for an admissible tiling $T$, but we need to show that $\mu(\varphi)$ is invariant under choices of such tilings. We will therefore prove an analogue of Proposition \ref{Lem_TileInvariant} in an analogous way.

\begin{proposition}\label{Lem_TileInvariantEq}
Let $\varphi$ be an equipment diagram and $\mu$ a valuation for $\varphi$. If $S$ and $T$ are admissible diagrams for $\varphi$, then $\mu(S) = \mu(T)$.
\end{proposition}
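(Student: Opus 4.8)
The plan is to adapt the proof of Proposition~\ref{Lem_TileInvariant} essentially verbatim, the one new ingredient being that the 2-cell a node-tile denotes depends on that tile (through the induced-valuation bookkeeping of Subsection~\ref{Subsec_EquipmentDiagrams}), so the refinement step must also track how the surrounding (co)unit tiles convert one choice of node-rectangle into another. Concretely I would prove two lemmas and then conclude: (i) if $S$ is an admissible refinement of an admissible tiling $T$ of $(\varphi,\Ra)$, then $\mu(S)=\mu(T)$; and (ii) any two admissible tilings of $(\varphi,\Ra)$ admit a common admissible refinement. Granting (i) and (ii), the proposition follows just as Proposition~\ref{Lem_TileInvariant} follows from Lemmas~\ref{Lem_InvariantRefinement} and~\ref{Lem_RefinementAdmissable}: let $X$ be a common admissible refinement of $S$ and $T$ and apply (i) twice to get $\mu(S)=\mu(X)=\mu(T)$. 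Note that composability is free here --- every compatible arrangement of cells in an equipment is composable --- so $\mu(S)$, $\mu(T)$ and $\mu(X)$ all exist unconditionally, and unlike the double-category case there is no pinwheel obstruction to manage.

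For (i), following the template of Lemma~\ref{Lem_InvariantRefinement}, it suffices to show that for each tile $T_i\in T$ the value of its induced sub-tiling $X_i\subseteq S$ (with $\bigcup_{x\in X_i} x = T_i$, cf.\ Definition~\ref{Def_Refinement}) agrees with the 2-cell that $T_i$ denotes in the $T$-arrangement; Theorem~1 of~\cite{Dawson1993} then assembles these equalities into $\mu(S)=\mu(T)$. If $T_i$ carries no node it carries at most one edge, and the empty and horizontal-edge cases reduce to the identity argument already used in Lemma~\ref{Lem_InvariantRefinement}. If $T_i$ carries a lone vertical edge $e$, then $T_i$ denotes the (co)unit indexed by $d_{\varphi|T_i}(e)\in\Ka$, while the tiles of $X_i$ denote the (co)units (or identities) indexed by $d_{\varphi|x}(e)$; since direction of flow is a graph homomorphism into $\Ka$, Lemma~\ref{Lem_WordsCompose} telescopes these composites along $e$ to $d_{\varphi|T_i}(e)$, so $\mu(X_i)$ is exactly the required (co)unit. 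If $T_i$ carries a node $n$, then since $S$ is admissible exactly one tile $S_0\in X_i$ contains $n$, is $n$'s node-rectangle in the $S$-arrangement, and denotes $\mu^{\Ra}_{\Sa}(n)$; the remaining tiles of $X_i$ denote (co)units or identities, and for each vertical edge $e$ incident to $n$ the graph-homomorphism property together with Lemma~\ref{Lem_WordsCompose} telescopes those (co)units along $e$ to the (co)unit indexed by $\big(\Delta^{S_0}_{T_i}(e)\big)\inv$. By Functoriality (Lemma~\ref{Lem_InducedValueFacts}, applied to the constant deformation, so that $\left(\mu^{\Ra}_{\Sa}\right)^{\Sa}_{\Ta}=\mu^{\Ra}_{\Ta}$), composing $\mu^{\Ra}_{\Sa}(n)$ with exactly these (co)units produces $\mu^{\Ra}_{\Ta}(n)$, which is the 2-cell $T_i$ denotes. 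Hence $\mu(X_i)$ equals the value of $T_i$.

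For (ii), I would mirror Lemma~\ref{Lem_RefinementAdmissable}: form the overlay $S\# T$ with tiles $S_i\cap T_j$ and check that $\varphi$ restricts to an equipment diagram on each such tile by verifying the clauses of Definition~\ref{Def_EquipmentDiagram} one at a time. Clauses (1) and (2a)--(2c) are inherited from $S_i$ and $T_j$ because the sides of $S_i\cap T_j$ are pieces of the sides of $S_i$ and of $T_j$ (just as vertical progressivity was inherited in Lemma~\ref{Lem_RefinementAdmissable}); and any tile $S_i\cap T_j$ containing a node $n$ must have $S_i$ and $T_j$ both equal to $n$'s node-tile, so $S_i\cap T_j$ still contains $n$ in its interior and still carries a single connected component, hence may itself serve as a node-rectangle. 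The overlay can fail to be admissible only when a tile --- necessarily sitting inside a node-tile of $S$ or of $T$, but not containing its node --- carries more than one connected component of $\varphi(G)$; re-tiling each such tile by the admissible-tiling construction given above yields a common admissible refinement of $S$ and $T$, with each node assigned the sub-tile containing it as its node-rectangle.

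The main obstacle is the node case of (i): making precise that the ring of (co)unit tiles surrounding $S_0$ inside $T_i$ composes, edge by edge, to exactly the passage $\big(\Delta^{S_0}_{T_i}(e)\big)\inv$ between the two signatures, and that whiskering $\mu^{\Ra}_{\Sa}(n)$ by these reproduces $\mu^{\Ra}_{\Ta}(n)$. This is where the algebra of $\Ka$ from Subsection~\ref{Sec_Bends}, the definition of $\Delta^{S_n}_{T_n}(e)$, and Functoriality must be lined up, and it is the step most prone to orientation errors; but once the graph-homomorphism property of direction of flow and Lemma~\ref{Lem_WordsCompose} are in hand, the rest is bookkeeping.
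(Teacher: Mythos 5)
Your proposal follows the paper's own proof essentially verbatim: it reduces to the same two lemmas (a refinement-invariance lemma and a common-admissible-refinement lemma, matching Lemmas~\ref{Lem_InvariantRefinementEq} and~\ref{Lem_RefinementAdmissableEq}), the refinement lemma hinges on exactly the same ingredients (the graph-homomorphism property of direction of flow, Lemma~\ref{Lem_WordsCompose} to telescope the (co)unit tiles, and Functoriality applied to the constant deformation to recover $\mu^{\Ra}_{\Ta}(n)$ from $\mu^{\Ra}_{\Sa}(n)$), and the common-refinement lemma is the overlay-plus-retile argument of Lemma~\ref{Lem_RefinementAdmissable} adapted to Definition~\ref{Def_EquipmentDiagram}. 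The only difference is presentational: you verify the tile-by-tile agreement directly, whereas the paper phrases it as showing $\left(\mu^{\Ra}_{\Ta}\right)^{\Ta}_{\Sa}(S) = \mu^{\Ra}_{\Ta}(T)$ and then unwinds via Lemma~\ref{Lem_InducedValueFacts} --- but the underlying bookkeeping is identical.
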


We will prove this using a few helper lemmas.

\begin{lemma}\label{Lem_InvariantRefinementEq}
If $S$ and $T$ are admissible tilings for an equipment diagram $(\varphi, \Ra)$, and $S$ is a refinement (see Def. \ref{Def_Refinement}) of $T$, then $\mu(S) = \mu(T)$ for any valuation $\mu$ of $\varphi$.
\end{lemma}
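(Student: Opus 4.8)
The plan is to follow the proof of Lemma~\ref{Lem_InvariantRefinement}, adapted to keep track of the bend data. For each tile $T_i \in T$ let $X_i \subseteq S$ be the set of $S$-tiles with $\bigcup_{x \in X_i} x = T_i$ (such a set exists since $S$ refines $T$); note $X_i$ is an admissible tiling of $\varphi|_{T_i}$ because $S$ is admissible and $X_i \subseteq S$. I will show that $\mu(X_i)$ equals the value $\mu(T_i)$ that the compatible arrangement coming from $T$ assigns to $T_i$. Granting this, grouping the tiles of $S$ according to which $T_i$ they lie in is one legitimate process for composing the arrangement denoted by $S$: composing first inside each $X_i$ produces the arrangement denoted by $T$. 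Since every compatible arrangement of cells in an equipment is composable (pinwheels are avoided by bending), both composites exist, and by Theorem~1 of \cite{Dawson1993} they are independent of the process, so $\mu(S) = \mu(T)$. Thus the entire content is the tile-by-tile identity $\mu(X_i) = \mu(T_i)$, which I prove by cases on what $T_i$ contains.

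If $T_i$ contains no part of $\varphi(G)$, every $x \in X_i$ is empty, each $\mu(x)$ is the identity $2$-cell on the common region, and a composite of identities is the identity, so $\mu(X_i) = \mu(T_i)$. If $T_i$ contains a horizontal edge $e$ and no node, then since horizontal diagrams are horizontally progressive each $x \in X_i$ carries either a sub-arc of $e$ (giving $\id_{\mu(e)}$) or nothing (giving an identity on a region), and again $\mu(X_i) = \id_{\mu(e)} = \mu(T_i)$, the identity whiskerings being trivial. If $T_i$ contains a vertical edge $e$ and no node, the new phenomenon appears: $T_i$ is assigned the $(\text{co})$unit of $\mu(e)$ named by $d_{\varphi|T_i}(e) \in \Ka$, while each $x \in X_i$ carrying a sub-arc of $e$ is assigned the $(\text{co})$unit named by $d_{\varphi|x}(e)$ and the empty tiles contribute identities that whisker trivially. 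Because $d_{\varphi|-}$ is a graph homomorphism into $\Ka$, the labels $d_{\varphi|x}(e)$ read along $e$ are composable two-letter words, and Lemma~\ref{Lem_WordsCompose} identifies their product in $\Ka$ as $d_{\varphi|T_i}(e)$; since composition of these particular $(\text{co})$units in $\Ea$ is exactly what $\Ka$ was built to describe in Section~\ref{Sec_Bends}, we get $\mu(X_i) = \mu(T_i)$.

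The remaining and decisive case is when $T_i$ contains a node $n$; by admissibility the node-rectangle of $n$ inside $\varphi|_{T_i}$ is $T_i$ itself, so $\mu(T_i) = \mu^{\Ra}_{\Ta}(n)$, i.e. $\mu(n)$ pre- and post-composed with the inverse bends $(\Delta^{R_n}_{T_i}(e))\inv$ over the vertical edges $e$ incident to $n$. In $S$ there is a unique $x_0 \in X_i$ containing $n$, whose node-rectangle is $x_0$, so $\mu(x_0) = \mu^{\Ra}_{\Sa}(n)$ is $\mu(n)$ composed with the inverse bends $(\Delta^{R_n}_{x_0}(e))\inv$; every other $x \in X_i$ carries either nothing, or a sub-arc of a horizontal leg of $n$ (an identity on a proarrow), or a sub-arc of a vertical leg of some $e$ (a $(\text{co})$unit of $\mu(e)$). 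Composing $\mu(x_0)$ with all the leg data, the horizontal legs and empty tiles whisker trivially, while the vertical leg of each $e$ contributes, by the homomorphism property of $d_{\varphi|-}$ and Lemma~\ref{Lem_WordsCompose}, the morphism of $\Ka$ realizing the flow of $e$ from $x_0$ out to $\partial T_i$. What must then be checked is the cocycle identity in $\Ka$ that $(\Delta^{R_n}_{x_0}(e))\inv$ followed by this leg-flow morphism equals $(\Delta^{R_n}_{T_i}(e))\inv$; this is a short direction-word manipulation of exactly the kind in the proof of Lemma~\ref{Lem_InducedValueFacts}, using that $e$ is considered straight inside the node-rectangles $x_0$ and $T_i$ (so $d^0 = \overline{d^1}$ there) together with the relation $y\bar y = (\,)$. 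Granting it, $\mu(X_i) = \mu^{\Ra}_{\Ta}(n) = \mu(T_i)$, finishing the case analysis. I expect this last cocycle bookkeeping in $\Ka$ to be the only genuine obstacle; the rest is either verbatim from Lemma~\ref{Lem_InvariantRefinement} or an immediate appeal to Lemmas~\ref{Lem_WordsCompose} and \ref{Lem_InducedValueFacts}.
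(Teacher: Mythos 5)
Your proposal is correct and follows essentially the same route as the paper: the same tile-by-tile case analysis (empty, horizontal edge, vertical edge via Lemma \ref{Lem_WordsCompose}, node), with the same cancellation of bend-words in $\Ka$ at the node. The only cosmetic difference is that you compare $\mu^{\Ra}_{\Sa}(x_0)$ directly against $\mu^{\Ra}_{\Ta}(T_i)$ where the paper factors through $\left(\mu^{\Ra}_{\Ta}\right)^{\Ta}_{\Sa}$ and Lemma \ref{Lem_InducedValueFacts}; these agree by that same functoriality lemma, and the cocycle identity you defer does check out as $d^0_{T_i}(e)\overline{d^0_{x_0}(e)}\cdot d^0_{x_0}(e)\overline{d^0_{R_n}(e)} = \big(\Delta^{R_n}_{T_i}(e)\big)\inv$.
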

\begin{proof}
We will show that $\left( \mu^{\Ra}_{\Ta} \right)^{\Ta}_{\Sa}(S) =  \mu^{\Ra}_{\Ta}(T)$. Then, by Lemma \ref{Lem_InducedValueFacts}, 
$$\mu(S) = \mu^{\Ra}_{\Sa}(S) = \left( \mu^{\Ra}_{\Ta} \right)^{\Ta}_{\Sa}(S) = \mu^{\Ra}_{\Ta}(T) = \mu(T).$$

Let $T_i$ be a tile of $T$ and let $X \subseteq S$ be the set of tiles dividing it in $S$. We will show that $ \left( \mu^{\Ra}_{\Ta} \right)^{\Ta}_{\Sa}(X) = \mu^{\Ra}_{\Ta}(T_i)$, from which it will follow that $\left( \mu^{\Ra}_{\Ta} \right)^{\Ta}_{\Sa}(S) =  \mu^{\Ra}_{\Ta}(T)$.

Either $T_i$ contains a node or it does not. Suppose it does not. It if its empty, then the valuations $\left( \mu^{\Ra}_{\Ta} \right)^{\Ta}_{\Sa}$ and $\mu^{\Ra}_{\Ta}$ are both equal to $\mu$ on $T_i$. Since $T_i$ is empty, all the $x \in X$ are similarly empty, and they evaluate to the identity of the region contained in $T_i$. A composite of identities is an identity, so their composite value will equal the identity of the region contained in $T_i$, which is the value of $T_i$. 

If $T_i$ contains a horizontal edge $e$, then similarly the valuations $\left( \mu^{\Ra}_{\Ta} \right)^{\Ta}_{\Sa}$ and $\mu^{\Ra}_{\Ta}$ are both equal to $\mu$ on $T_i$. There is then some $x \in X$ which also intersects $e$ and $\mu(x)$ will be the identity on $\mu(e)$. Since there is only one edge in $T_i$, no nontrivial whiskering can take place, and all other tiles in $X$ evaluate either to the identity of a region or to the identity of $e$. Therefore, $\left( \mu^{\Ra}_{\Ta} \right)^{\Ta}_{\Sa}(X) = \mu(e) = \mu^{\Ra}_{\Ta}(T_i)$. 

Suppose $T_i$ contains a vertical edge $e$ so that $\mu^{\Ra}_{\Ta}(T_i)$ is the (co)unit on $\mu(e)$ given by $d_{T_i}(e)$. The tiles of $X$ that $e$ intersects as it winds through $T_i$ may be organized into a sequence $(\sigma_j)_{1 \leq j \leq k}$ according to the ordering on $e$. Note that $d^0_{\sigma_1}(e) = d^0_{T_i}(e)$ and $d^1_{\sigma_k}(e) = d^1_{T_i}(e)$. If $e$ leaves $\sigma_j$ on side $x$, it must enter $\sigma_{j+1}$ on side $\bar{x}$. For this reason, the directions $d_{\sigma_j}(e)$ form a composable sequence in $\Ka$, and so by Lemma \ref{Lem_WordsCompose}, their composite equals $$d^0_{\sigma_1}(e)d^1_{\sigma_k}(e) = d^0_{T_i}(e)d^1_{T_i}(e) = d_{T_i}(e).$$
Since all other tiles in $X$ are identities, the total composite $\left( \mu^{\Ra}_{\Ta} \right)^{\Ta}_{\Sa}(X)$ will equal the (co)unit on $\mu(e)$ given by $d_{T_i}(e)$, which is $\mu^{\Ra}_{\Ta}(T_i)$.

Suppose that $T_i$ contains a node $n$. Let $S_n \in X$ be the tile which also contains $n$. Horizontal edges and blank tiles in $X$ will contribute only identities which are the same in both valutations, so their contribution can safely be ignored. Suppose, without loss of generality, that $e$ is a vertical edge incident to $n$ with $e(1) = n$. Then $\left( \mu^{\Ra}_{\Ta} \right)^{\Ta}_{\Sa}(S_n) = \left( \mu^{\Ra}_{\Ta} \right)^{\Ta}_{\Sa}(n)$ involves the composite of $\mu^{\Ra}_{\Ta}(n)$ with $\big(\Delta^{T_n}_{S_n}(e)\big)\inv = d^0_{S_n}(e)\overline{d^0_{T_i}(e)}$. Consider the sequence $(\sigma_j)_{1 \leq j \leq k} \subseteq X - \{S_n\}$ though which $e$ winds as it makes its way from the boundary of $T_i$ to the boundary of $S_n$. By an argument similar to that above, the compsite of the values of the cells $(\sigma_j)$ will equal $d^0_{T_i}(e)\overline{d^0_{S_n}(e)}$. The full composite in $\left( \mu^{\Ra}_{\Ta} \right)^{\Ta}_{\Sa}(X)$ then includes the composite $$d^0_{T_i}(e)\overline{d^0_{S_n}(e)}d^0_{S_n}(e)\overline{d^0_{T_i}(e)} = d^0_{T_i}(e)\overline{d^0_{T_i}(e)} = d_{T_i}(e).$$
The latter precisely the value of $e$ in $\mu^{\Ra}_{\Ta}(T_i)$, so in total, $\left( \mu^{\Ra}_{\Ta} \right)^{\Ta}_{\Sa}(X) = \mu^{\Ra}_{\Ta}(T_i)$.
\end{proof}

\begin{lemma} \label{Lem_RefinementAdmissableEq}
Let $S$ and $T$ be admissible tilings of an equipment diagram $(\varphi, \Ra)$. Then they admit a common refinement which is also admissible. 
\end{lemma}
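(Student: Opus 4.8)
The plan is to mimic the proof of Lemma~\ref{Lem_RefinementAdmissable} for double diagrams. Let $S \# T$ be the overlay tiling with tiles $R_{ij} := S_i \cap T_j$; this is a common refinement of $S$ and $T$ in the sense of Definition~\ref{Def_Refinement}, so it suffices to turn it into an admissible one (and a refinement of a refinement is again a refinement). We will show first that $\varphi$ restricts to an equipment diagram on each nonempty $R_{ij}$, with the node-rectangle around any node contained in $R_{ij}$ being $R_{ij}$ itself, as the definition of an admissible tiling for equipment diagrams demands; and second that any $R_{ij}$ carrying more than one connected component of $\varphi(G)$ --- necessarily one containing no node --- may be retiled by the construction used to build an admissible tiling $T_\varphi$, without disturbing the node-rectangle data of the other tiles. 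The union of all these local tilings is the desired admissible common refinement.

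For the first point, observe that every side of $R_{ij}$ is contained in a side of the same orientation of $S_i$ or of $T_j$. The verification that $\varphi_h$ restricts to a horizontal diagram on $R_{ij}$ (condition~(1) of Definition~\ref{Def_EquipmentDiagram}) is then verbatim that of Lemma~\ref{Lem_RefinementAdmissable}: horizontal edges meet only vertical sides of $S_i, T_j$, hence only vertical sides of $R_{ij}$; endpoints of restricted horizontal edges that lie on $\partial R_{ij}$ lie in the interiors of those vertical sides; and horizontal progressivity is inherited. Conditions~(2a) and~(2b) for the restricted vertical diagram are inherited as well --- (2b) because it is a condition on the edges of $\varphi$ alone, and (2a) because a vertical edge can only reach $\partial R_{ij}$ through a side of $S_i$ or $T_j$. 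The condition that takes real work is~(2c): we must rule out a vertical edge $e$ that enters $R_{ij}$ through its bottom side and leaves through its top. But a horizontal side of $R_{ij}$ lies on a horizontal side of $S_i$ or of $T_j$, and $\varphi$ is already an equipment diagram on each of those tiles; so whenever an endpoint of the restricted $e$ sits on a horizontal side of $R_{ij}$, condition~(2c) applied inside $S_i$ or $T_j$, together with~(2b), forces $e(0) <_v e(1)$ on $R_{ij}$ except when $e$'s two endpoints lie on opposing vertical sides. Running through the handful of cases (each endpoint being an original endpoint of $e$, a crossing of $\partial R_{ij}$, or a boundary node of $\varphi$) reduces each to the corresponding statement for $S_i$ or $T_j$.

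For the node-rectangle condition~(3): if $R_{ij}$ contains a node $n$, set $R_n := R_{ij}$. Then~(3a) holds since $R_{ij} \subseteq S_i$ and $S_i$ contains only $n$; (3c) holds since conditions~(1) and~(2) restrict, as just shown; and~(3b), that $R_{ij}$ carries a single connected component of $\varphi(G)$, follows because the connected component meeting $S_i$ consists of $n$ together with its incident edges, each of which --- being progressive in the appropriate direction within $S_i$ --- meets the sub-rectangle $R_{ij}$ in a single connected arc attached to $n$. Consequently a tile of $S\#T$ that carries two or more connected components contains no node, so retiling it by the $T_\varphi$-construction leaves the node-rectangle requirement vacuous on the new tiles and unchanged on all others.

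The main obstacle is condition~(2c). In the double-diagram setting progressivity forbids any backtracking, so slicing by another tiling can never create an upward-flowing edge; here vertical edges may bend, and one has to check that the overlay cannot produce a sub-tile in which a vertical edge runs from its bottom side to its top side. The resolution --- that every horizontal side of $R_{ij}$ is inherited from $S_i$ or $T_j$, on which $\varphi$ already satisfies~(2c) --- is conceptually simple but requires the case analysis above; once it is in hand, the remainder is the routine restriction argument of Lemma~\ref{Lem_RefinementAdmissable}.
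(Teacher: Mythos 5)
Your proposal follows essentially the same route as the paper: overlay the two tilings to get $S \# T$, verify that $\varphi$ restricts to an equipment diagram on each $R_{ij}=S_i\cap T_j$ using the fact that every side of $R_{ij}$ is inherited from a side of $S_i$ or $T_j$ (with condition (2c) as the only nontrivial check), declare $R_{ij}$ itself the node-rectangle, and retile the multi-component node-free tiles as in the construction of $T_\varphi$. The paper's case analysis for (2c) is the "top three sides of $S_i$ / bottom three sides of $T_j$" argument you gesture at, so the two proofs agree in substance and in level of detail.
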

\begin{proof}
As in Lemma \ref{Lem_RefinementAdmissable}, we will begin by showing that $\varphi$ restricts to an equipment diagram on each tile $S_i \cap T_j$ of the largest common refinement $S \# T$ of $S$ and $T$. Then we can simply re-tile those tiles which have more than one connected component.

We will show that $\varphi$ restricts to an equipment diagram on $R_{ij} = S_i \cap T_j$. We verify the 3 conditions of Def. \ref{Def_EquipmentDiagram}.

\begin{enumerate}
    \item The horizontal restriction $\varphi_h$ is a horizontal diagram because it is for $S_i$ and $T_j$.
    \item Condition (a) is satisfied because of the definition of the boundary and because it is satisfied for $S_i$ and $T_j$. Condition (b) is satisfied because it is satisfied for $S_i$ and $T_j$. Condition (c) is satisfied for all edges which intersect only the boundaries of $S_i$ or $T_j$, or a node, since $S_i$ and $T_j$ satisfy this condition. If $e$ has, without loss of generality, $e(0)$ on a side of $S_i$ and $e(1)$ on a side of $T_j$ which are distinct in $R_{ij}$, $e(0)$ must be on one of the top $3$ sides of $S_i$ and $e(1)$ on the bottom $3$ sides of $T_j$. If $e(0)$ is on the top of $S_i$ then it is on the top of $R_{ij}$, and the condition is satisfied; similarly, if $e(1)$ is on the bottom of $T_j$ then it is on the bottom of $R_{ij}$ and the condition is satisfied. If neither of these cases hold, then they are on opposite vertical boundaries of $R_{ij}$ and the condition does not apply. 
    \item Since the node-rectangle of a node in $R_{ij}$ (if there is one) is $R_{ij}$ itself, the conditions are satisfied by the discussion above. For good measure, note that since $R_{ij} \subseteq S_i$, if $R_{ij}$ contains a node $n$ then $S_i$ does as well, and therefore all edges which intersect $R_{ij}$ also intersect $S_i$ and are then incident to $n$. 
\end{enumerate}

\end{proof}

We can now prove Proposition \ref{Lem_TileInvariantEq}.

\begin{proof}[Proof of Proposition \ref{Lem_TileInvariantEq}]
By Lemma \ref{Lem_RefinementAdmissableEq}, there is an admissible common refinement $X$ of $S$ and $T$. By Lemma \ref{Lem_InvariantRefinementEq}, $\mu(S) = \mu(X)$ and $\mu(T) = \mu(X)$. So $\mu(S) = \mu(T)$.
\end{proof}

Finally, we can prove that the value of an equipment diagram is invariant under deformation.
\begin{theorem}[Invariance Under Deformation] \label{Thm:InvariantValueEq}
Let $(\varphi, \Ra) \xto{h} (\phi, \Sa)$ be a deformation of equipment diagrams, and let $\mu$ be a valuation of $(\varphi, \Ra)$. Then
$$\mu(\varphi, \Ra) = \mu^{\Ra}_{\Sa}(\phi, \Sa).$$
\end{theorem}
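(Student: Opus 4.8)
\section*{Proof proposal}

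The plan is to adapt the proof of Theorem~\ref{Thm:InvariantValue} to the equipment setting, replacing the naive transport of valuations by the induced valuations $\mu^{\Ra}_{\Sa}$ and using the ingredients now in hand: the admissible-tiling construction above (every equipment diagram admits an admissible tiling), Proposition~\ref{Lem_TileInvariantEq} (all admissible tilings of an equipment diagram yield the same value, so $\mu(\varphi,\Ra)$ is well defined and equals $\mu^{\Ra}_{\Ta}(T)$ for any admissible $T$ with node-rectangles $\Ta$), and the functoriality of Lemma~\ref{Lem_InducedValueFacts}. Note that, in contrast with the double-diagram case, no neatness hypothesis is needed: every compatible arrangement of cells in an equipment is composable by bending, so composability of the relevant tilings is automatic.

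First I would set up a compactness reduction exactly as in Lemma~\ref{Lem_ValueInvariant}. A sufficiently small perturbation of an equipment diagram preserves the admissibility of a fixed tiling $T$: each clause of Definition~\ref{Def_EquipmentDiagram} and of the definition of admissible tiling (the horizontal-diagram condition, the non-backtracking conditions 2(b) and 2(c), and the requirement that a tile containing a node be that node's node-rectangle) is stable under small perturbations, provided the edges of $\varphi$ meet the walls of $T$ transversally, which can be arranged when building $T$; in particular the side of each tile through which a given vertical edge enters, the flow direction of each vertical edge through each tile, and the side through which each vertical edge enters each node-rectangle do not jump. Hence for each $t$ choose an admissible tiling $T_t$ of $h(-,t)$ and an open interval $U_t \ni t$ on which $T_t$ stays admissible with all this flow data unchanged, extract a finite subcover, and obtain (by a Lebesgue number argument) a partition $0 = t_0 < t_1 < \cdots < t_n = 1$ together with admissible tilings $T_0,\ldots,T_{n-1}$ such that $T_i$ is admissible for $h(-,s)$ with all flow data constant for every $s \in [t_i, t_{i+1}]$. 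Write $\Ta_i$ for the set of tiles of $T_i$ containing nodes, so that $(h(-,s),\Ta_i)$ is an equipment diagram for all such $s$.

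Next I would propagate the value along the partition. Two observations drive this. First, for a fixed diagram $\psi$ the value $\mu^{\Ra}_{\mathcal{A}}(\psi,\mathcal{A})$ does not depend on the choice $\mathcal{A}$ of node-rectangles: by definition it equals $(\mu^{\Ra}_{\mathcal{A}})^{\mathcal{A}}_{\mathcal{Q}}(T) = \mu^{\Ra}_{\mathcal{Q}}(T)$ for any admissible tiling $T$ of $\psi$ with node-rectangles $\mathcal{Q}$ (the second equality by Lemma~\ref{Lem_InducedValueFacts}(2)), and this is independent of $T$ by Proposition~\ref{Lem_TileInvariantEq}. Second, for $s,s' \in [t_i,t_{i+1}]$ the induced valuation $\mu^{\Ra}_{\Ta_i}$ and the compatible arrangement of cells denoted by $T_i$ are literally the same for $h(-,s)$ as for $h(-,s')$, since the flow data that defines both is unchanged across the sub-interval; hence $\mu^{\Ra}_{\Ta_i}(h(-,s),\Ta_i) = \mu^{\Ra}_{\Ta_i}(T_i) = \mu^{\Ra}_{\Ta_i}(h(-,s'),\Ta_i)$. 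Combining these, the chain $\mu(\varphi,\Ra) = \mu^{\Ra}_{\Ta_0}(\varphi,\Ta_0) = \mu^{\Ra}_{\Ta_0}(h(-,t_1),\Ta_0) = \mu^{\Ra}_{\Ta_1}(h(-,t_1),\Ta_1) = \cdots = \mu^{\Ra}_{\Ta_{n-1}}(\phi,\Ta_{n-1}) = \mu^{\Ra}_{\Sa}(\phi,\Sa)$ holds, where the first, the interleaving, and the last equalities are instances of the first observation and the remaining ones of the second. This is the desired identity.

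The main obstacle I expect is bookkeeping rather than conceptual difficulty: one must check that a single choice of ``small enough perturbation'' can be made so that all the flow data relevant to the finitely many tilings $T_i$ really is constant on each sub-interval --- this is what makes the induced valuations and the denoted arrangements match at the shared endpoints, and it is also what the proof of Lemma~\ref{Lem_InducedValueFacts} tacitly relies on when it cancels the middle terms of its telescoping $\Delta$-computation. Securing the transversality needed for the stability of admissibility while building the $T_t$, and keeping straight which node-rectangle set each intermediate valuation is indexed by, are the places where care is required; once they are pinned down the argument is a direct transcription of the double-diagram proof with Proposition~\ref{Lem_TileInvariantEq} and Lemma~\ref{Lem_InducedValueFacts} doing the new work.
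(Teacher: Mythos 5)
Your proposal is correct and follows essentially the same route as the paper's proof: a compactness subdivision of $[0,1]$ so that one admissible tiling serves two consecutive stages, Proposition~\ref{Lem_TileInvariantEq} to make the value tiling-independent, and Lemma~\ref{Lem_InducedValueFacts} to telescope the induced valuations across changes of node-rectangles. Your added attention to transversality and to the constancy of the flow data on each sub-interval makes explicit a point the paper's compactness step leaves tacit, but it does not change the argument.
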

\emph{Proof.} By compactness, we may choose $(t_i)_{1 \leq i \leq k} \subseteq [0,1]$ such that $t_1 = 0$, $t_k = 1$, and $T_{h(-,t_i)}$ is an admissible tiling for $h(-, t_{i + 1})$. Let $\varphi_i = h(-,t_i)$ and let $\Ra_i$ denote the choice of node-rectangles for $\varphi_i$. Note that $\mu^{\Ra}_{\Ra_i}$ is the valuation induced by $\mu$ on $\varphi_i$, and that $\mu^{\Ra}_{\Ra_i}(\varphi_i, \Ra_i) = \mu^{\Ra}_{\Ra_i}(T_{\varphi_i})$ by definition. Since $T_{\varphi_i}$ is an admissible tiling for $\varphi_{i+1}$, $\mu^{\Ra}_{\Ra_{i+1}}(\varphi_{i+1}, \Ra_{i+1}) = \mu^{\Ra}_{\Ra_{i+1}}(T_i)$ by Proposition \ref{Lem_TileInvariantEq}. Then
\begin{align*}
    \mu^{\Ra}_{\Ra_i}(\varphi_i, \Ra_i) &= \mu^{\Ra}_{\Ra_i}(T_{\varphi_i}) \\
        &= \left(\mu^{\Ra}_{\Ra_i}\right)^{\Ra_i}_{\Ta_{\varphi_i}}(T_{\varphi_i}) &\mbox{by the definition of $\mu^{\Ra}_{\Ra_i}(T_{\varphi_i})$} \\
        &= \mu^{\Ra}_{\Ta_{\varphi_i}}(T_{\varphi_i}) &\mbox{by Lemma \ref{Lem_InducedValueFacts}}\\
        &= \left(\mu^{\Ra}_{\Ra_{i+1}}\right)^{\Ra_{i+1}}_{\Ta_{\varphi_i}}(T_{\varphi_i})  &\mbox{by Lemma \ref{Lem_InducedValueFacts}}\\
        &= \mu^{\Ra}_{\Ra_{i+1}}(T_{\varphi_i}) &\mbox{by the definition of $\mu^{\Ra}_{\Ra_{i+1}}(T_{\varphi_i})$}\\
        &= \mu^{\Ra}_{\Ra_{i+1}}(\varphi_{i+1}, \Ra_{i+1}) &\mbox{for all $i$.}
\end{align*}

Therefore, $$ \mu(\varphi, \Ra) = \mu^{\Ra}_{\Ra_1}(\varphi_1, \Ra_1) = \mu^{\Ra}_{\Ra_k}(\varphi_k, \Ra_k) = \mu^{\Ra}_{\Sa}(\phi, \Sa). \qed$$

\bibliography{Bibliography}

\end{document}